\documentclass[a4paper,11pt]{article}

\usepackage{amssymb,amsthm,amsmath,dsfont}
\usepackage{color,graphicx}
\usepackage{float}
\usepackage{enumerate}
\setlength{\textwidth}{150truemm}
\setlength{\textheight}{230truemm}
\setlength{\topmargin}{10truemm}
\hoffset -10truemm
\voffset -21truemm
\parindent 4mm
\parskip 1.2ex plus 0.5ex minus 0.5ex
\usepackage[T1]{fontenc}
\usepackage{lmodern}



\newcommand{\R}{{\mathbb{R}}}

\newcommand{\eps}{\varepsilon}
\newcommand{\cC}{\mathcal{C}}

\definecolor{mygreen}{rgb}{0.11,0.7,0.22}
\definecolor{mred}{rgb}{1,0.,0.}
\definecolor{mcyan}{rgb}{1.,0.5,0.}
\definecolor{mgreen}{rgb}{0.,0.5,0.}
\definecolor{mblue}{rgb}{0.,0.,1.}

\newtheorem{theorem}{Theorem}[section]
\newtheorem{prop}{Proposition}[section]

\newtheorem{remark}{Remark}[section]

\def\vertt{\vert\!\vert\!\vert}
\def\pa{\partial}
\def\ds{\displaystyle}


\title{On the existence of solutions for a drift-diffusion system arising in corrosion modelling}
\author{
{\bf I. Lacroix-Violet and C. Chainais-Hillairet} \\[1.mm]
	Laboratoire Paul Painlev\'e, CNRS UMR 8524,\\
	Universit\'e Lille Nord de France,Universit\'e Lille 1,\\ 
	59655 Villeneuve d'Ascq Cedex, France.\\[1.mm]
          Project-Team SIMPAF,\\
  	INRIA Lille Nord de France,\\
	 40 av. Halley, 59650 Villeneuve d'Ascq, France .\\[1.mm]
	\texttt{Ingrid.Violet@math.univ-lille1.fr,}\\
	\texttt{Claire.Chainais@math.univ-lille1.fr}\\\\
}
\begin{document}
\maketitle


\begin{abstract}
In this paper, we consider a drift-diffusion system describing the corrosion of an iron based alloy in a nuclear waste repository. In comparison with the classical drift-diffusion system arising in the modeling of semiconductor devices, the originality of the corrosion model lies in the boundary conditions which are of Robin type and induce an additional coupling between the equations. We prove the existence of a weak solution by passing to the limit on a sequence of approximate solutions given by a semi-discretization in time.
\end{abstract}


\section{Introduction}\label{modele}

\subsection{General framework of the study} 

In this paper, we consider a system of partial differential equations describing the corrosion of an iron based alloy in a nuclear waste repository. At the request of the French nuclear waste management agency ANDRA, investigations are conducted to evaluate the long-term safety assessment of the geological repository of high-level radioactive waste. The concept for the storage under study  is the following: the waste shall be confined in a glass matrix, placed into cylindrical steel canisters and stored in a claystone layer at a depth of several hundred of meters. The long-term safety assessment of the geological repository has to take into account the degradation of the carbon steel used for the waste overpacks, which is mainly caused by generalized corrosion processes. 

In this framework, the Diffusion Poisson Coupled Model (DPCM) has been developed by Bataillon {\em et al}  \cite{Baetal2010} in order to describe the corrosion processes at the surface at the steel canisters. It assumes that the metal is covered by a dense oxide layer which is in contact with the claystones. The model describes the development of the dense oxide layer. In most cases, the thickness of the oxide layer ranges from nanometers to micrometers and is  much smaller than the size of the steel canister, which justifies a 1D-modelling.  The model is made of electromigration-diffusion equations for the transport of charge carriers (three species are considered: electrons, $F\!e^{3+}$ cations and oxygen vacancies), coupled with a Poisson equation for the electrostatic potential. The interaction between the oxide layer and the adjacent layers (metal and claystone) are described by Butler-Volmer laws for the electrochemical surface reactions and potential drops. The model includes moving boundary equations. 

Numerical methods for the approximation of the DPCM model  have been designed and studied  by Bataillon {\em et al } in \cite{BaBoChFuHoTo2011}. Numerical experiments with real-life data shows the ability of model to reproduce expected physical behaviors. However, theoretical questions like existence of solutions and long-time behavior of the DPCM model have not been investigated yet. In this paper, we consider a simplified version of the DPCM model, already introduced in \cite{BaBoChFuHoTo2011}. In this case, the interfaces of the oxide layer are assumed to be fixed and only two charge carriers are taken into account: electrons and cations $F\!e^{3+}$. We will prove the existence of a global in time solution for this simplified model.

\subsection{Presentation of the model} 

We recall here the simplified version of the DPCM model  introduced in \cite{BaBoChFuHoTo2011}. It is a dimensionless model. The unknowns are the densities of electrons and cations, which are respectively denoted by $N$ and $P$ and the electrical potential denoted by $\Psi$. The current densities are denoted by $J_N$ and $J_P$; they contain both a drift part and a diffusion part. As we do not take into account the moving boundary equations in the simplified model, the domain describing the oxyde layer is the interval $(0,1)$. The equations describing the electrochemical behavior of the dense oxyde layer write, for $t\geq 0$: 
\begin{subequations}\label{DDsyst}
 \begin{equation}
\label{DDP}
\partial_t P + \partial_x J_P=0,~~J_P=-\partial_x P -3P\partial_x \Psi, ~~\mbox{in } (0,1),
\end{equation}
\begin{equation}
\label{DDN}
\eps\partial_t N + \partial_x J_N=0,~~J_N=-\partial_x N +N\partial_x \Psi, ~~\mbox{in } (0,1),
\end{equation}
\begin{equation}
\label{poisson}
-\lambda^2 \partial_{xx} \Psi =3P-N+\rho_{hl},~~ \mbox{in } (0,1),
\end{equation}
\end{subequations}
where $\rho_{hl}$  is the net charge density of the ionic species in the host lattice (we will assume that it is a constant in all the sequel) and $\lambda^2$ and $\varepsilon$ are positive dimensionless parameters arising in the scaling. Let us just mention that $\varepsilon$ is the ratio of the mobility coefficients of cations and electrons, then $\varepsilon \ll 1$. 

As the equations for the carrier densities \eqref{DDP} and \eqref{DDN} have the same form, we will use the synthetical form :
\begin{equation}\label{DDu}
\varepsilon_u\partial_t u + \partial_x J_u=0,~~J_u=-\partial_x u -z_u u\partial_x \Psi, ~~\mbox{in } (0,1).
\end{equation}
For $u=P,N$, the charge numbers of the carriers are respectively $z_u=3,-1$ and we respectively have $\varepsilon_u=1,\varepsilon$. 

Let us now focus on the boundary conditions. Charge carriers are created and consumed at both interfaces $x=0$ and $x=1$.  
The kinetics of the electrochemical reactions at the interfaces are described by Butler-Volmer laws at interfaces. It leads to Robin boundary conditions on $N$ and $P$. As in \cite{BaBoChFuHoTo2011}, we assume that the boundary conditions for $P$ and $N$ have exactly the same form. Therefore, for $u=P,N$, they write:
\begin{subequations}\label{CLu}
\begin{align}
-J_u&=r_u^0(u,\Psi) \mbox{ on } x=0, \label{CLu0}\\
J_u&=r_u^1(u,\Psi,V)\mbox{ on } x=1, \label{CLu1}
\end{align}
\end{subequations}
where $V$ is a given applied potential (we just consider here the potentiostatic case) and $r_u^0$ and $r_u^1$ are linear and monotonically increasing functions with respect to their first argument. More precisely, due to the electrochemical reactions at the interfaces, $r_u^0$ and $r_u^1$, for $u=P,N$,  write: 
\begin{subequations}\label{defru}
\begin{align}
r_u^0(s,x)&=\beta_u^0(x)s-\gamma_u^0(x),\label{defru0}\\
r_u^1(s,x,V)&=\beta_u^1(V-x)s-\gamma_u^1(V-x),\label{defru1}
\end{align}
\end{subequations}
where the functions $(\beta_u^i)_{i=0,1}, (\gamma_u^i)_{i=0,1}$ are expressed with the interface kinetic coefficients $(m_u^i, k_u^i)_{i=0,1}$, the positive transfer coefficients $(a_u^i, b_u^i)_{i=0,1}$, the maximum occupancy for octahedral cations in the lattice $P^m$ and the electron density in the state of metal $N^m$. They write, for $u=P,N$: 
\begin{subequations}\label{defbetagamma}
\begin{align}
\beta_u^i(x)&=m_u^i e^{-z_u b_u^i x}+k_u^ie^{z_u a_u^i x},~i=0,1,\label{betau}\\
\gamma_u^0(x)&=m_u^0u^me^{-z_u b_u^0 x},~\gamma_u^1(x)=k_u^1u^me^{z_u a_u^1 x}.\label{gammau}
\end{align}
\end{subequations}

The boundary conditions for the Poisson equation take into account that the metal and the solution can be charged because they are respectively electronic and ionic conductors. Such an accumulation of charges induces a field given by the Gauss law. These accumulations of charges depend on the voltage drop at the interface by the usual Helmholtz law which links the charge to the voltage drop through a capacitance. The parameters $\Delta \Psi_0^{pzc}$ and $\Delta \Psi_1^{pzc}$ are the voltage drop corresponding to no accumulation of charges respectively in the metal and in the solution. Finally, the boundary conditions for the electric potential write:
\begin{subequations}\label{CLpsi}
\begin{equation}
\label{CLpsi0}
\Psi-\alpha_0\partial_x \Psi = \Delta \Psi_0^{pzc},~~\hbox{on}~x=0,
\end{equation}
\begin{equation}
\label{CLpsi1}
\Psi+\alpha_1\partial_x \Psi = V-\Delta \Psi_1^{pzc},~~\hbox{on}~x=1,
\end{equation}
\end{subequations}
where $\alpha_0$ and $\alpha_1$ are positive dimensionless parameters arising in the scaling. 

The system is supplemented with initial conditions, given in $L^{\infty}(0,1)$:
\begin{equation}
\label{CINP}
N(0, x) = N_0(x); ~~P(0, x) = P_0(x),\quad x\in(0,1).
\end{equation}
In all the sequel, we will assume that the interface kinetic coefficients are given constants and satisfy:
\begin{equation}\label{hyp_mk}
m_u^0, k_u^0, m_u^1, k_u^1 >0, \mbox{ for } u=P,N. 
\end{equation}
The positive transfer coefficients satisfy: 
\begin{equation}\label{hyp_ab}
a_u^0, b_u^0, a_u^1, b_u^1 \in [0,1], \mbox{ for } u=P,N, 
\end{equation}
and  we assume that 
\begin{equation}\label{hyp_NPrho}
3P^m-N^m+\rho_{hl}=0.
\end{equation}
Indeed, in the applications, we have $\rho_{hl}=-5$, $P^m=2$ and $N^m=1$, so that the relation \eqref{hyp_NPrho} is satisfied. 
Then, it is expected  that the solution to the corrosion model verifies $0\leq P(t,x)\leq P^m$ and $0\leq N(t,x)\leq N^m$ almost everywhere. These estimates will be proved in Theorem \ref{existence}; they are crucial for the proof of existence of solutions to the corrosion model. 

\subsection{Main result and outline of the paper}

Let us first note that the system of equations \eqref{DDP}-\eqref{poisson} is a simplified case (linear case) of the so-called drift-diffusion system. This model is currently used in the framework of semiconductors device modelling (see for instance \cite{ BookJu, BookMaRiSc1990, BookMa}) or plasma physics (see \cite{Ch1984}). In this context the drift-diffusion model has been widely studied, from the analytical as from the numerical point of view. Let us refer to \cite{DaV96, FaIt95, Ga1985, Ga1994, Ga01, Ju1994, Ju97, JuPe00, JuPe00_2, JuVi2007, BookMaRiSc1990, Sch90} for results on existence and uniqueness of solutions (using some fixed points theorem) and asymptotic limits of the model (as quasi-neutral limit, zero-relaxation time limit, ...). From a numerical point of view, different methods have been proposed and studied (see for instance \cite{BrMaPi89, ChLiPe03, ChPe03, ChCo95, SaSa97, SchGu69}).

In all these papers, the considered boundary conditions are Dirichlet, Neumann or mixed Dirichlet/Neumann boundary conditions (corresponding to the ohmic contacts and the insulated boundary segments of the semiconductor device). Then, the originality of the corrosion model described in this paper lies in the boundary conditions \eqref{CLu}, \eqref{CLpsi} which are of Robin type, and induce an additional coupling between the equations. The authors have shown in  \cite{ChVi2011} the existence of solution for such a model in the stationary case. Moreover in \cite{BaBoChFuHoTo2011}, a numerical scheme for the model \eqref{DDsyst}-\eqref{CINP} is given and analyzed.

Let us now state the main result of this paper: the existence and boundedness of a weak solution to system \eqref{DDsyst}-\eqref{CINP}.

\begin{theorem}\label{existence}
Assume \eqref{hyp_mk}, \eqref{hyp_ab}, \eqref{hyp_NPrho} and that  $N_0,P_0\in L^{\infty}(0,1)$ satisfy 
\begin{equation}\label{CondCINP}
0 \leq P_0(x) \leq P^m,~~0 \leq N_0(x) \leq N^m,~~\mbox{ for almost every } x\in (0,1).
\end{equation}
Let us also assume that $\Delta\Psi_0^{pzc}$ and $\Delta\Psi_1^{pzc}$ verify
\begin{subequations}\label{condpsi}
\begin{equation}
\label{condpsi0}
-\frac{1}{3a_P^0}\left(1+\log\left(a_P^0k_P^0\alpha_0\right)\right) \leq
\Delta\Psi_0^{pzc} \leq
\frac{1}{a_N^0}\left(1+\log\left(a_N^0k_N^0\alpha_0\right)\right), 
\end{equation}
\begin{equation}
\label{condpsi1}
-\frac{1}{b_N^1}\left(1+\log\left(b_N^1m_N^1\alpha_1\right)\right) \leq
\Delta\Psi_1^{pzc} \leq
\frac{1}{3b_P^1}\left(1+\log\left(b_P^1m_P^1\alpha_1\right)\right). 
\end{equation}
\end{subequations}
Then the problem \eqref{DDsyst}-\eqref{CINP} admits a weak solution $(P,N,\Psi)$ which is defined by $P,~N \in L^\infty([0,T]\times (0,1)) \cap L^2(0,T;H^1(0,1))$, $\Psi \in L^2(0,T;H^1(0,1))$ for all $T>0$ and:
\begin{multline}\label{DDufaible}
\mbox{ For } u=N,P, \ \forall {\varphi} \in L^2(0,T;H^1(0,1)),\\
-\varepsilon_u\int_0^T \int_0^1 u \partial_t{\varphi} dxdt +\varepsilon \int_0^1 u_0(x){\varphi}(0,x)dx-\int_0^T \int_0^1 \left(-\partial_x u-z_uu\partial_x\Psi\right)\partial_x {\varphi} dx dt  \\
+ \int_0^T \left[\left(\beta_u^1(V-\Psi(t,1))u(t,1)-\gamma_u^1(V-\Psi(t,1))\right){\varphi}(t,1) \right. \\
+\left. \left(\beta_u^0(\Psi(t,0))u(t,0)-\gamma_u^0(\Psi(t,0))\right){\varphi}(t,0)\right]dt=0, 
\end{multline}
\begin{multline}\label{poissonfaible}
\mbox{ and } \forall {\varphi} \in L^2(0,T;H^1(0,1)),\\
\lambda^2 \int_0^T \int_0^1 \partial_x \Psi \partial_x {\varphi} dx dt - \int_0^T \frac{\lambda^2}{\alpha_1} \left(V-\Psi(t,1)-\Delta \Psi_1^{pzc}\right){\varphi}(t,1)dt \\
+ \int_0^T \frac{\lambda^2}{\alpha_0} \left(\Psi(t,0)-\Delta \Psi_0^{pzc}\right){\varphi}(t,0)dt =\int_0^T \int_0^1 (3P-N+\rho_{hl}){\varphi} dx dt. 
\end{multline}
Moreover the electron and cation densities $N$ and $P$ satisfy for all $t \in [0,T]$ and $x \in (0,1)$
\begin{equation}
\label{BornesNP}
0 \leq P(t,x) \leq P^m,~~0 \leq N(t,x) \leq N^m.
\end{equation}
\end{theorem}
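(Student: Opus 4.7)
Following the strategy announced in the abstract, I would build approximate solutions through a semi-discretization in time with uniform step $\Delta t = T/K$. For each $k = 1,\ldots,K$, I would construct $(P^k, N^k, \Psi^k) \in (H^1(0,1))^3$ by solving the coupled stationary elliptic problem made of backward-Euler versions of \eqref{DDP}--\eqref{DDN}, the Poisson equation \eqref{poisson}, and the Robin boundary conditions \eqref{CLu}--\eqref{CLpsi}. This is essentially a sequence of problems of the kind treated in the stationary setting of \cite{ChVi2011}, so I would adapt that existence theory, using a Schauder fixed point argument in which one first solves a linear Poisson problem for a frozen $(P,N)$ and then a linear drift-diffusion problem for $(\tilde P, \tilde N)$ with a frozen $\Psi$, while propagating the $L^\infty$ bounds from one time step to the next.

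The cornerstone is the a priori estimate \eqref{BornesNP} at each discrete time step, which I would obtain by a Stampacchia truncation argument: test the equation for $P^k$ by $(P^k-P^m)^+$ and by $-(P^k)^-$, and similarly for $N^k$. The interior diffusion and drift terms yield the right sign once the flux is rewritten in Slotboom variables. The Robin boundary contributions are where assumptions \eqref{hyp_NPrho} and \eqref{condpsi} enter: \eqref{hyp_NPrho} ensures that at the saturation state $P=P^m$ (resp.\ $N=N^m$) the reaction terms $r_u^i$ collapse to expressions of the right sign, while the conditions \eqref{condpsi} on the potentials of zero charge are used to bound the exponential factors $e^{\pm z_u a_u^i \Psi(\cdot, i)}$ coming from \eqref{defbetagamma}. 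The peculiar $1+\log(\cdot)$ form of \eqref{condpsi} strongly suggests the one-variable inequality $y e^{-cy} \leq 1/(ce)$, applied to the boundary values of $\Psi$ recovered from the Robin condition \eqref{CLpsi}.

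Once \eqref{BornesNP} is established uniformly in $\Delta t$, the Poisson source is uniformly bounded in $L^\infty$, so each $\Psi^k$ is uniformly bounded in $H^1(0,1)$ and its traces at $x=0,1$ are controlled. Testing the discrete drift-diffusion equations by $P^k$ and $N^k$ and summing in $k$ should give a uniform discrete $L^2(0,T;H^1(0,1))$ bound on the densities, while the equations themselves provide a uniform bound on a discrete time derivative in $(H^1(0,1))'$. Introducing piecewise-constant-in-time interpolants and applying an Aubin--Simon type compactness argument, I would then extract subsequences converging strongly in $L^2([0,T]\times(0,1))$ and weakly in $L^2(0,T;H^1(0,1))$; continuity of the trace then provides strong convergence of the boundary values, which is exactly what one needs to pass to the limit in the nonlinear Butler--Volmer boundary integrals and recover the weak formulations \eqref{DDufaible}--\eqref{poissonfaible}.

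The main obstacle will be the $L^\infty$ bound \eqref{BornesNP} itself, because the Robin conditions entangle the three unknowns right at the boundary: to sign the Robin integral one needs control of $\Psi(t,0)$ and $\Psi(t,1)$, which in turn requires the $L^\infty$ bound on $(P,N)$ through the Poisson source. This circular dependency is to be broken by exploiting the monotonicity of $s\mapsto r_u^i(s,\cdot)$ and the structural compatibility \eqref{hyp_NPrho}; the sharp-looking hypothesis \eqref{condpsi} is exactly the condition under which the boundary sign analysis closes once $\Psi$ has been estimated from the Robin data.
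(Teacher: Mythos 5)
Your overall strategy matches the paper's (time semi-discretization, Stampacchia truncation for the $L^\infty$ bounds, compactness of Aubin--Simon type, pass to the limit in the weak formulations), and the intuition that the $1+\log(\cdot)$ form of \eqref{condpsi} comes from optimizing $y\mapsto y e^{-cy}$ is essentially what makes the boundary sign functions $\xi_u^i$ from \cite{BaBoChFuHoTo2011} nonpositive. But the two proofs diverge on how the discrete problem is set up, and this difference is not a detail. The paper \emph{decouples} the time step: $\Psi^k$ is determined from the \emph{old} densities $(P^k,N^k)$ by a linear Robin--Poisson problem (Lax--Milgram with the norm $\vertt\cdot\vertt_{H^1}$), and then $u^{k+1}$ is determined from $\Psi^k$ and $u^k$ by a \emph{linear} drift-diffusion equation, made coercive by the Slotboom change of variables. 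No fixed point is needed at the discrete level; two Lax--Milgram applications per step suffice. Your proposal instead envisages a genuinely coupled system at each step solved by a Schauder argument, which is a heavier tool and, crucially, you would still need a device to control the drift sign in the $L^\infty$ induction.

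This is where the concrete gap is. Because the paper's scheme is explicit in the Poisson source, after testing the $u^{k+1}$-equation against $(u^{k+1})^-$ or $(u^{k+1}-u^m)^+$ and integrating the drift term by parts, one substitutes $-\lambda^2\partial_{xx}\Psi^k = 3P^k - N^k + \rho_{hl}$ (with \eqref{hyp_NPrho} to replace $\rho_{hl}$) and uses $z_Pz_N<0$ with the induction hypothesis to bound the resulting interior term from below by $\frac{z_u^2}{2\lambda^2}\int ((u^{k+1})^-)^2(u^k-u^m)$. That term has the \emph{wrong} sign, and the induction closes only under the explicit CFL-type restriction \eqref{conddt}, namely $\Delta t \leq \lambda^2\min\bigl(\tfrac{1}{9P^m},\tfrac{\varepsilon}{N^m}\bigr)$, which ensures $\varepsilon_u - \frac{z_u^2\Delta t}{2\lambda^2}(u^m-u^k)\geq 0$. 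Your plan contains no such time-step condition, and your claim that Slotboom variables "yield the right sign" in the truncation argument is not how the paper's sign analysis works: Slotboom is used only to make the bilinear form coercive for Lax--Milgram, while the $L^\infty$ estimate is run directly on $u^{k+1}$ via the Poisson substitution. If you keep the decoupled scheme, you must add \eqref{conddt}; if you keep a fully implicit coupled scheme to avoid it, you owe the reader a compactness argument for the Schauder map and a sign analysis whose interior term now involves the \emph{new} densities, which is a different (and unproved) computation. Either way the proposal as written leaves the cornerstone $L^\infty$ step open.
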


In order to prove this Theorem, we  consider a semi-discretization in time  for \eqref{DDsyst}--\eqref{CINP}, as done by A.~J\"ungel and I. Violet in \cite{JuVi2007} for the quantum drift-diffusion equations. We first show the  existence of a solution to the semi-discrete problem. Then,  we obtain some a priori estimates on the solution to the semi-discrete problem. These estimates will ensure the compactness of a sequence of approximate solutions. Finally by passing to the limit on the time step in the semi-discrete scheme, we obtain the existence of a weak solution for the problem \eqref{DDsyst}-\eqref{CINP} satisfying \eqref{BornesNP}.
\\
The paper is organized as follows. In Section 2, we present the semi-discrete problem associated to \eqref{DDsyst}-\eqref{CINP} and we prove the existence of a solution to this problem. In Section 3, we show some a priori estimates satisfied by the semi-discrete solution. Finally, in Section~4, we establish the convergence of a sequence of approximate solutions and passing  to the limit in the semi-discrete scheme, we prove Theorem \ref{existence}.


\section{The semi discrete problem}

In this Section, we propose a discretization in time of the system \eqref{DDsyst}--\eqref{CINP}. Therefore, we introduce a uniform subdivision $(t_k)_{0\leq k\leq K}$ of the time interval $[0,T]$, for some $T>0$. The time step is denoted by $\Delta t$ and we have $t_k=k\Delta t$. 

We set 
\begin{equation}\label{systSD_init}
P^0=P_0, N^0=N_0.
\end{equation}
 Starting from this initial condition, for all $0\leq k\leq K$, we consider the following system:
\begin{equation}
\label{systSD_Psi}
-\lambda^2 \partial_{xx} \Psi^k=3P^{k}-N^{k}+\rho_{hl}, \mbox{  in (0,1)},
\end{equation}
\begin{equation}
\label{systSD_u}
\frac{\varepsilon_u}{\Delta t} \left(u^{k+1}-u^{k}\right) +\partial_x J_u^k=0,~~J_u^k=-\partial_x u^{k+1}-z_uu^{k+1}\partial_x \Psi^k,\mbox{ in (0,1)}, \mbox{ for } u=P,N,
\end{equation}
supplemented with the following boundary conditions: 
\begin{subequations}\label{systSD_CLpsi}
\begin{equation}
\label{systSD_CLpsi0}
\Psi^k-\alpha_0 \partial_x \Psi^k=\Delta\Psi_0^{pzc}, ~~ \hbox{on}~~x=0,
\end{equation}
\begin{equation}
\label{systSD_CLpsi1}
\Psi^k+\alpha_1 \partial_x \Psi^k=V-\Delta\Psi_1^{pzc}, ~~\hbox{on}~~ x=1,
\end{equation}
\end{subequations}
\begin{subequations}\label{systSD_CLu}
\begin{equation}
\label{systSD_CLu0}
-J_u^k=\beta_u^0(\Psi^k)u^{k+1}-\gamma_u^0(\Psi^k),~~\hbox{on}~~x=0, \mbox{ for } u=P,N,
\end{equation}
\begin{equation}
\label{systSD_CLu1}
J_u^k=\beta_u^1(V-\Psi^k)u^{k+1}-\gamma_u^1(V-\Psi^k),~~\hbox{on}~~x=1, \mbox{ for } u=P,N.
\end{equation}
\end{subequations}

Let us first note that, for each value of $k$, the system \eqref{systSD_Psi}--\eqref{systSD_CLu} is decoupled. Indeed, knowing $P^k$ and $N^k$, we define $\Psi^k$ as the solution to \eqref{systSD_Psi}, \eqref{systSD_CLpsi}. As \eqref{systSD_Psi} is a classical elliptic equation, if the  right-hand side belong to $L^2(0,1)$, $\Psi^k$ will be defined as the solution to a variational formulation (see below) of \eqref{systSD_Psi}, \eqref{systSD_CLpsi}. Therefore, $\Psi^k\in H^1(0,1)$ and as $H^1(0,1) \subset {\mathcal C}^0([0,1])$ the boundary conditions \eqref{systSD_CLu} have a sense. Then, knowing $\Psi^k$, we define $u^{k+1}= P^{k+1}$ or $N^{k+1}$ as the solution to \eqref{systSD_u}, \eqref{systSD_CLu}.  The equation {\eqref{systSD_u}} on $u^{k+1}$ is a linear convection-diffusion equation. Using the Slotboom change of variables \cite{ChVi2011, BookMa} will lead to a classical elliptic equation for which we can also define a variational formulation and the associated weak solution. 

\begin{theorem}\label{existence_discret}
Under the assumptions of Theorem \ref{existence}, the semi-discrete scheme \eqref{systSD_init}--\eqref{systSD_CLu} admits a unique weak solution  $(P^k,N^k,\Psi^k)_{0\leq k\leq K}$ with $\Psi^k \in H^1(0,1)$ for all $0\leq k\leq K$ and $P^k,N^k \in H^1(0,1)$ for all $1\leq k\leq K$.
\end{theorem}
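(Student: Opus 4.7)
The argument is an induction on $k$. The scheme decouples: given $P^k$ and $N^k$, one first solves the linear elliptic problem \eqref{systSD_Psi}--\eqref{systSD_CLpsi} for $\Psi^k$, then uses this $\Psi^k$ to solve two independent linear convection-diffusion problems \eqref{systSD_u}--\eqref{systSD_CLu}, one for $P^{k+1}$ and one for $N^{k+1}$. At initialization we only know $P^0,N^0\in L^\infty(0,1)\subset L^2(0,1)$; each inductive step will in fact produce $P^{k+1},N^{k+1}\in H^1(0,1)$, so the hypothesis $P^k,N^k\in L^2(0,1)$ is preserved.

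For the Poisson step, the natural weak formulation in $H^1(0,1)$ reads $a_1(\Psi^k,\varphi)=\ell_1(\varphi)$ for every $\varphi\in H^1(0,1)$, where
\begin{equation*}
a_1(\Psi,\varphi)=\lambda^2\int_0^1\partial_x\Psi\,\partial_x\varphi\,dx+\frac{\lambda^2}{\alpha_0}\Psi(0)\varphi(0)+\frac{\lambda^2}{\alpha_1}\Psi(1)\varphi(1),
\end{equation*}
and $\ell_1$ gathers the volume source $3P^k-N^k+\rho_{hl}$ together with the Robin data coming from \eqref{systSD_CLpsi}. Continuity of $a_1$ on $H^1(0,1)$ follows from the 1D trace theorem. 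Since $\alpha_0,\alpha_1>0$, the boundary contributions combined with the elementary bound $\|\varphi\|_{L^2}^2\leq 2|\varphi(0)|^2+2\|\partial_x\varphi\|_{L^2}^2$ make $a_1$ coercive on $H^1(0,1)$. Lax-Milgram then yields a unique $\Psi^k\in H^1(0,1)$, and the embedding $H^1(0,1)\hookrightarrow\cC^0([0,1])$ places $\Psi^k$ in $L^\infty(0,1)$ with well-defined boundary traces.

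For the drift-diffusion step, I would follow the hint in the text and use the Slotboom change of unknown $u^{k+1}=e^{-z_u\Psi^k}v^{k+1}$, so that $J_u^k=-e^{-z_u\Psi^k}\partial_x v^{k+1}$ and \eqref{systSD_u} becomes the self-adjoint equation
\begin{equation*}
\frac{\varepsilon_u}{\Delta t}\,e^{-z_u\Psi^k}v^{k+1}-\partial_x\!\left(e^{-z_u\Psi^k}\partial_x v^{k+1}\right)=\frac{\varepsilon_u}{\Delta t}u^k.
\end{equation*}
Substituting into the Robin conditions \eqref{systSD_CLu} and integrating by parts lead to the bilinear form
\begin{equation*}
a_2(v,\varphi)=\int_0^1 e^{-z_u\Psi^k}\!\left(\tfrac{\varepsilon_u}{\Delta t}v\varphi+\partial_xv\,\partial_x\varphi\right)dx+\beta_u^0(\Psi^k(0))e^{-z_u\Psi^k(0)}v(0)\varphi(0)+\beta_u^1(V-\Psi^k(1))e^{-z_u\Psi^k(1)}v(1)\varphi(1).
\end{equation*}
Since $\Psi^k\in\cC^0([0,1])$ the weight $e^{-z_u\Psi^k}$ is bounded above and below by positive constants; together with $\varepsilon_u,\Delta t>0$ and the positivity of $\beta_u^0,\beta_u^1$ guaranteed by \eqref{hyp_mk}--\eqref{hyp_ab}, this yields continuity and coercivity of $a_2$ on $H^1(0,1)$. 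Lax-Milgram furnishes a unique $v^{k+1}\in H^1(0,1)$, and since $H^1(0,1)$ is a Banach algebra, $u^{k+1}=e^{-z_u\Psi^k}v^{k+1}$ lies in $H^1(0,1)$. Running the argument separately for $u=P$ and $u=N$ closes the induction step.

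No deep difficulty is anticipated: the whole proof reduces to two applications of Lax-Milgram and a classical change of variables. The only slightly technical point is the coercivity of $a_1$, because the Poisson problem carries purely Robin data and no mean-value constraint, so one must genuinely exploit the boundary terms to recover the $L^2$-part of the $H^1$ norm. Observe that the upper bounds $P^m,N^m$, the relation \eqref{hyp_NPrho} and the restrictions \eqref{condpsi} on $\Delta\Psi_i^{pzc}$ play no role at this stage; they will enter the subsequent sections through the a priori estimates used to control the sequence of semi-discrete solutions.
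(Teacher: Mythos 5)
Your proposal follows essentially the same path as the paper's proof: induction on $k$, decoupling of the Poisson step from the two convection-diffusion steps, Lax-Milgram for the Robin--Poisson problem (you use the explicit inequality $\|\varphi\|_{L^2}^2\leq 2|\varphi(0)|^2+2\|\partial_x\varphi\|_{L^2}^2$ where the paper invokes the equivalent norm $\vertt\cdot\vertt_{H^1}$, but these are the same observation), then the Slotboom substitution $u^{k+1}=e^{-z_u\Psi^k}\zeta_u^{k+1}$ to symmetrize the drift term and a second application of Lax-Milgram. All the key points are present and correct, including the role of $\Psi^k\in\cC^0([0,1])$ in bounding the Slotboom weight and the positivity of $\beta_u^0,\beta_u^1$ from \eqref{hyp_mk}, so nothing further is needed.
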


\begin{proof} 
The proof will be done by induction. Let $k\geq 0$, we assume that $P^k, N^k\in L^{\infty}(0,1)$ (a fortiori satisfied if $P^k,N^k \in H^1(0,1)\subset \mathcal{C}^0([0,1])$) are known. This property is satisfied for $k=0$.
Then, the variational formulation associated to the problem on $\Psi^k$ \eqref{systSD_Psi}, \eqref{systSD_CLpsi} writes:
\begin{equation}
\label{poissonfaibleSD}
\lambda^2a_\Psi(\Psi^k,\varphi)=b^k_\Psi(\varphi), \forall \varphi\in H^1(0,1), 
\end{equation}
with
\begin{eqnarray}
&{}&a_\Psi(\Phi,\varphi)=\int_0^1 \partial_x \Phi \partial_x \varphi dx  +\frac{1}{\alpha_1}\Phi (1)\varphi(1)+ \frac{1}{\alpha_0}\Phi (0)\varphi(0),\nonumber \\
&{}&b^k_\Psi(\varphi)=\frac{\lambda^2}{\alpha_1} \left(V-\Delta \Psi_1^{pzc}\right)\varphi(1)+\frac{\lambda^2}{\alpha_0}\Delta \Psi_0^{pzc}\varphi(0) + \int_0^1 (3P^k-N^k+\rho_{hl})\varphi dx.\nonumber
\end{eqnarray}
But, the functional space $H^1(0,1)$ can be equipped either with its usual norm:
$$\|u\|_{H^1}=\int_0^1 (\partial_x u)^2 dx+ \int_0^1 u^2dx,$$
or with the following equivalent norm (see \cite{ChVi2011}): 
$$\vertt u\vertt_{H^1}=\int_0^1 (\partial_x u)^2dx + u(0)^2 + u(1)^2.$$
Indeed, using the compact injection from $H^1(0,1)$ to ${\mathcal C}([0,1])$ and an adaptation of Poincaré inequality, we obtain the existence of $\kappa>0$ such that :
\begin{equation}\label{eq-norms}
\ds\frac{1}{\kappa}\Vert u\Vert_{H^1}\leq \vertt u\vertt_{H^1}\leq \kappa\Vert u\Vert_{H^1} \quad \forall u\in H^1(0,1).
\end{equation}
Using the norm $\vertt\cdot \vertt_{H^1}$, it is immediately clear that $a_{\Psi}(\cdot,\cdot)$ is a bilinear, continuous  and coercive form in $H^1(0,1)$. Moreover, $b_\Psi^k$ is a continuous linear form in $H^1(0,1)$.  Then, the  Lax-Milgram Theorem ensures that the problem \eqref{poissonfaibleSD} admits a unique solution $\Psi^k\in H^1(0,1)$.  Thanks to the embedding from $H^1(0,1)$ to ${\mathcal{C}^0}([0,1])$, we also have  $\Psi^k \in {\mathcal{C}^0}([0,1])$. 

Let us now focus on the problem on $u^{k+1}= P^{k+1}$ or $N^{k+1}$ \eqref{systSD_u}, \eqref{systSD_CLu}. We first use the classical Slotboom change of variables introduced for instance in \cite{BookMa} : 
$u^{k+1}=e^{-z_u\Psi^k}\zeta_u^{k+1}$. The current density {$J_u^k$} rewrites: ${J_u^k} =-e^{-z_u\Psi^k}\partial_x\zeta_u^{k+1}$ and we can consider the variational formulation: 
%
\begin{equation}
\label{LaxMilgramNP}
a_u^k(\zeta_u,\varphi)=b_u^k(\varphi), \forall \varphi\in H^1(0,1), 
\end{equation} 
with 
\begin{eqnarray}
a_u^k(\kappa_u,\varphi)&=&\frac{\varepsilon_u}{\Delta t}\int_0^1 e^{-z_u \Psi^k}\kappa_u\varphi dx + \int_0^1 e^{-z_u \Psi^k}\partial_x\kappa_u\partial_x\varphi dx \nonumber \\
&{}&+ \beta_u^1(V-\Psi^k(1))e^{-z_u \Psi^k(1)}\kappa_u(1)\varphi(1)+ \beta_u^0(\Psi^k(0))e^{-z_u \Psi^k(0)}\kappa_u(0)\varphi(0),\nonumber \\
b_u(\varphi)&=&\frac{\varepsilon_u}{\Delta t}\int_0^1 u^k \varphi dx + \gamma_u^1(V-\Psi^k(1))\varphi(1)+\gamma_u^0(\Psi^k(0))\varphi(0).\nonumber
\end{eqnarray}
As $\Psi^k$ is continuous in $[0,1]$, it is bounded and $e^{-{z_u\Psi^k}}$ is also bounded and its lower bound is strictly positive. The hypothesis \eqref{hyp_mk} also ensures that the functions $\beta_u^0, \beta_u^1, \gamma_u^0, \gamma_u^1$ are strictly positive. Therefore $a_u^k$ is a bilinear, continuous  and coercive form in $H^1(0,1)$ and $b_u^k$ is a continuous linear form in $H^1(0,1)$. {Then},  the Lax-Milgram Theorem answers that the problem \eqref{LaxMilgramNP} admits a unique solution  denoted $\zeta_u^{k+1}\in H^1(0,1)$ {and} $u^{k+1}= e^{-z_u\Psi^k}\zeta_u^{k+1}\in H^1(0,1) $ is the unique weak solution to \eqref{systSD_u}, \eqref{systSD_CLu0}, \eqref{systSD_CLu1}. This ends the proof of Theorem~\ref{existence_discret}.
\end{proof}

Thanks to the semi-discrete scheme \eqref{systSD_init}--\eqref{systSD_CLu} and Theorem \ref{existence_discret}, we can define an approximate solution to the corrosion model \eqref{DDsyst}--\eqref{CINP}. This approximate solution is a piecewise constant function in time. It is denoted by $(P_{\Delta t}, N_{\Delta t}, \Psi_{\Delta t})$ and defined by 
\begin{multline}\label{solapp}
P_{\Delta t}(t,x)=P^{k}(x),\ N_{\Delta t}(t,x)=N^{k}(x),\  \Psi_{\Delta t}(t,x)=\Psi^k(x),\\
\forall x \in (0,1),~\forall t\in (t_{k-1},t_k],\ \forall 1\leq k \leq K. 
\end{multline}

Our objective is to prove the existence of a solution to the corrosion model by passing to the limit on a sequence of approximate solutions $(P_{\Delta t}, N_{\Delta t}, \Psi_{\Delta t})_{\Delta t} $ when $\Delta t$ tends to~0.  Therefore, we need to establish a priori estimates which will yield the compactness of the sequence of approximate solutions. 


\section{A priori estimates}

In this Section we establish the  a priori estimates satisfied by the approximate solution $(P_{\Delta t}, N_{\Delta t}, \Psi_{\Delta t})$: $L^{\infty}$-estimates on $N_{\Delta t}$ and $P_{\Delta t}$, an $L^2(0,T,H^1(0,1))$-estimate on $\Psi_{\Delta t}$ and $L^2(0,T,H^1(0,1))$-estimates on $N_{\Delta t}$ and $P_{\Delta t}$.

\subsection{$L^{\infty}$-estimates on $P_{\Delta t}$, $N_{\Delta t}$}

\begin{prop} \label{prop_Linf}
Let us assume the hypothesis of Theorem \ref{existence} and that the time step $\Delta t$ verifies $\Delta t \leq \tau$ with
\begin{equation}
\label{conddt}
\tau = \lambda^2\min \left(\frac{1}{9P^m},\frac{\varepsilon}{N^m}\right),
\end{equation}
for all $k \in \{0,...,K\}$  we have
\begin{equation}
\label{estlinf}
0 \leq P^{k}(x) \leq P^m,~~ 0 \leq N^{k}(x) \leq N^m ,\quad \forall x\in (0,1),
\end{equation}
and therefore
\begin{equation}
\label{estlinfNDPD}
0 \leq P_{\Delta t} (t,x) \leq P^m,~~ 0 \leq N_{\Delta t} (t,x)\leq N^m, \quad \forall x\in (0,1), \forall t\in (0,T].
\end{equation}

\end{prop}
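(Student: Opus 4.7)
The plan is to proceed by induction on $k$, the base case $k=0$ following from \eqref{CondCINP}. Assuming the bounds \eqref{estlinf} at step $k$, the four inequalities at step $k+1$ are obtained by truncation arguments in the weak formulation of \eqref{systSD_u}--\eqref{systSD_CLu}, testing with $(P^{k+1}-P^m)^+$ and $-(P^{k+1})^-$ for $P$, and analogues for $N$. The estimate \eqref{estlinfNDPD} on the piecewise-constant approximation then follows at once from \eqref{solapp}.

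Focusing on the upper bound $P^{k+1}\leq P^m$, I test with $w := (P^{k+1}-P^m)^+$. The discrete time derivative contributes at least $\tfrac{1}{\Delta t}\|w\|_{L^2(0,1)}^2$ thanks to $P^k\leq P^m$, and the diffusion produces $\|\partial_x w\|_{L^2(0,1)}^2$. For the drift I split $3P^{k+1}=3w+3P^m$ and integrate by parts, reducing the interior part to integrals of $\partial_{xx}\Psi^k$ against $w^2$ and $w$. The Poisson equation \eqref{systSD_Psi} combined with the electroneutrality assumption \eqref{hyp_NPrho} yields $-\lambda^2\partial_{xx}\Psi^k = 3(P^k-P^m)+(N^m-N^k)\in[-3P^m,\,N^m]$ by the induction hypothesis, so the interior drift remainder is controlled by $(9P^m/2\lambda^2)\|w\|_{L^2}^2$ plus a linear correction treated by Young's inequality, and is absorbed into the discrete-time coercivity once $\Delta t\leq \lambda^2/(9P^m)$. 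An analogous computation for $N^{k+1}$ produces the restriction $\Delta t\leq \varepsilon\lambda^2/N^m$; together these give $\tau$.

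The delicate point is the boundary contribution. Substituting \eqref{defbetagamma} reveals the cancellation $\beta_P^0(s)P^m-\gamma_P^0(s) = k_P^0 P^m e^{3a_P^0 s}$, so the flux boundary term at $x=0$ reduces to the non-negative expression $k_P^0 P^m e^{3a_P^0\Psi^k(0)}w(0) + \beta_P^0(\Psi^k(0))w(0)^2$. The IBP of the drift term, after using the Robin BC \eqref{systSD_CLpsi0} to write $\partial_x\Psi^k(0)=(\Psi^k(0)-\Delta\Psi_0^{pzc})/\alpha_0$, adds $-\tfrac{3P^m}{\alpha_0}(\Psi^k(0)-\Delta\Psi_0^{pzc})w(0)-\tfrac{3}{2\alpha_0}(\Psi^k(0)-\Delta\Psi_0^{pzc})w(0)^2$. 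Non-negativity of the linear-in-$w(0)$ coefficient for every possible value of $\Psi^k(0)$ demands $k_P^0 e^{3a_P^0 s}\geq \tfrac{3}{\alpha_0}(s-\Delta\Psi_0^{pzc})$ for all $s\in\R$; minimizing the gap over $s$ gives \emph{exactly} the lower bound on $\Delta\Psi_0^{pzc}$ stated in \eqref{condpsi0}. The same inequality then forces the quadratic coefficient $\beta_P^0(\Psi^k(0))-\tfrac{3}{2\alpha_0}(\Psi^k(0)-\Delta\Psi_0^{pzc})\geq \tfrac{1}{2}k_P^0 e^{3a_P^0\Psi^k(0)}\geq 0$. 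The boundary $x=1$ is treated symmetrically via the upper bound on $\Delta\Psi_1^{pzc}$ in \eqref{condpsi1}, and one concludes $\|w\|_{L^2}=0$.

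The remaining three bounds fit the same template. For $P^{k+1}\geq 0$ one tests with $-(P^{k+1})^-$; the boundary contributions are automatically non-negative because $\gamma_P^i\geq 0$ enters with a favorable sign. For $N^{k+1}$ the charge number $z_N=-1$ flips certain signs, which exchanges the roles of the lower and upper parts of \eqref{condpsi} and produces the $\varepsilon\lambda^2/N^m$ restriction on $\Delta t$. The main obstacle throughout is pinpointing the precise algebraic form of \eqref{condpsi}: the thresholds are exactly the minima of functionals of the form $s\mapsto ke^{as}-cs$, and any cruder bound (Cauchy--Schwarz or plain Young) at the Robin boundary would leave an uncontrollable residue and prevent closing the $L^\infty$ loop.
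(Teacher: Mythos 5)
Your proposal follows essentially the same route as the paper's proof: induction on $k$, testing the semi-discrete convection--diffusion equation with the truncations $(u^{k+1}-u^m)^+$ and $(u^{k+1})^-$, integrating the drift by parts, replacing $\partial_{xx}\Psi^k$ via the Poisson equation together with the electroneutrality relation \eqref{hyp_NPrho} and the inductive $L^\infty$ bounds, and extracting the restriction $\Delta t\le\tau$ from the resulting $\|w\|^2_{L^2}$ and linear-in-$w$ interior terms. The one place where you go further than the paper is the boundary analysis: the paper packages the boundary contributions using the auxiliary functions $\xi_u^0,\xi_u^1$ and simply cites \cite{BaBoChFuHoTo2011} for the fact that $\xi_u^i\le 0$ under \eqref{condpsi}, whereas you re-derive that sign by noticing the cancellation $\beta_P^0(s)P^m-\gamma_P^0(s)=k_P^0 P^m e^{3a_P^0 s}$ and minimizing $s\mapsto s-\tfrac{\alpha_0 k_P^0}{3}e^{3a_P^0 s}$, which reproduces \eqref{condpsi0} exactly; this is algebraically the same inequality $\xi_P^0\le 0$, made explicit. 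One small imprecision: you claim that in the lower-bound case (testing with $-(P^{k+1})^-$) the boundary terms are ``automatically'' of the right sign because $\gamma_u^i\ge 0$; that only disposes of the linear-in-$v(0)$ part, while the quadratic coefficient $\beta_P^0(\Psi^k(0))-\tfrac{3}{2\alpha_0}(\Psi^k(0)-\Delta\Psi_0^{pzc})$ reappears and still requires \eqref{condpsi0}, exactly as in the upper-bound case (the paper's $f_u^i$ have the same $\xi_u^i$-structure as the $g_u^i$). Since you have already established the needed inequality, this is a wording slip rather than a gap.
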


\begin{proof}
The proof will be done by induction. First, we note that \eqref{estlinf} is satisfied for $k=0$ thanks to \eqref{systSD_init} and  \eqref{CondCINP}. Let us now assume that the property  is satisfied for some $k \in \{0,...,K-1\}$ , we will establish that both inequalities in  \eqref{estlinf} remain true at step $k+1$. Therefore, we adapt the technique used by A. J\"ungel and Y.~J.~Peng in  \cite{JuPe00}. 

Let $u=P,~N$. We first prove the positivity of $u^{k+1}$. Multiplying \eqref{systSD_u} by  $(u^{k+1})^-=\min(u^{k+1},0)$ and integrating over (0,1), we obtain, after an integration by parts: 
\begin{multline}
 \frac{\varepsilon_u}{\Delta t}\int_0^1 \left(\left(u^{k+1}\right)^-\right)^2 dx - \frac{\varepsilon_u}{\Delta t }\int_0^1 u^k\left(u^{k+1}\right)^- dx + \int_0^1 \left(\partial_x \left(u^{k+1}\right)^-\right) ^2dx \\
+\frac{z_u}{2\lambda^2} \int_0^1 \left(\left(u^{k+1}\right)^-\right)^2\left(z_P \left(P^k-P^m\right)+z_N\left(N^k-N^m\right)\right)dx = \left(f_u^1+f_u^0\right), \label{positivite0}
\end{multline}
with 
$$
\begin{aligned}
f_u^0&=J_u^k(0)u^{k+1}(0)^-+\frac{z_u}{2}(u^{k+1}(0)^-)^{2}\pa_x\Psi^k(0),\\
f_u^1&=-J_u^k(1)u^{k+1}(1)^--\frac{z_u}{2}(u^{k+1}(1)^-)^{2}\pa_x\Psi^k(1).
\end{aligned}
$$
But, using the induction hypothesis and the fact that $z_Nz_P\leq 0$, we have:
\begin{multline*}
\frac{z_u}{2\lambda^2} \int_0^1 \left(\left(u^{k+1}\right)^-\right)^2\left(z_P \left(P^k-P^m\right)+z_N\left(N^k-N^m\right)\right)dx\\
\geq \frac{(z_u)^2}{2\lambda^2} \int_0^1 \left(\left(u^{k+1}\right)^-\right)^2\left(u^k-u^m\right) dx,
\end{multline*}
and \eqref{positivite0} implies 
\begin{equation} \label{positivite1}
\frac{1}{\Delta t} \int_0^1 \left(u^{k+1}\right)^{-2}\left(\varepsilon_u-\frac{z_u^2\Delta t}{2\lambda^2}(u^m-u^{k})\right) dx + \int_0^1 \left(\partial_x \left(u^{k+1}\right)^-\right)^2 dx 
\leq (f_u^1+f_u^0).
\end{equation}
We remark that the condition on $\Delta t$ \eqref{conddt} and the induction hypothesis  ensure that 
\begin{equation}\label{positivite2}
\varepsilon_u-\frac{z_u^2\Delta t}{2\lambda^2}(u^m-u^{k}(x)) \geq 0 \quad \forall x\in(0,1).
\end{equation}
Let us now study the sign of $f_u^0$ and $f_u^1$. Therefore,we introduce, as in \cite{BaBoChFuHoTo2011}, the following functions :
\begin{align*}
\xi_u^0(x)&=\gamma_u^0(x)-u^m\beta_u^0(x)+u^m\frac{z_u}{\alpha_0}(x-\Delta\Psi_0^{pzc})\quad\forall x\in\R,\\
\xi_u^1(x)&=\gamma_u^1(x)-u^m\beta_u^1(x)-u^m\frac{z_u}{\alpha_1}(x-\Delta\Psi_1^{pzc})\quad\forall x\in\R.
\end{align*}
As shown in \cite{BaBoChFuHoTo2011}, $\xi_u^0$ and $\xi_u^1$ are nonpositive functions under the hypothesis \eqref{condpsi}.
Using the boundary conditions \eqref{systSD_CLpsi}, \eqref{systSD_CLu}, we have:
\begin{align*}
f_u^0&=\ds\frac{(u^{k+1}(0)^-)^{2}}{2u^m}\left(\xi_u^0(\Psi^k(0))-u^m\beta_u^0(\Psi^k(0))-\gamma_u^0(\Psi^k(0))\right)+\gamma_u^0(\Psi^k(0))u^{k+1}(0)^-\\
f_u^1&=\ds\frac{(u^{k+1}(1)^-)^{2}}{2u^m}\left(\xi_u^1(V-\Psi^k(1))-u^m\beta_u^1(V-\Psi^k(1))-\gamma_u^1(V-\Psi^k(1))\right)\\
&\hspace{9.cm} +\gamma_u^1(V-\Psi^k(1))u^{k+1}(1)^-. 
\end{align*}
As the functions $(\beta_u^i, \gamma_u^i)_{i=0,1}$ are nonnegative on $\R$ and the functions $(\xi_u^i)_{i=0,1}$ are nonpositive on $\R$ for $u=N,P$, we deduce $f_u^0\leq 0$ and $f_u^1\leq 0$. Then, \eqref{positivite1} and \eqref{positivite2} imply that $\left(u^{k+1}\right)^-=0$ on $(0,1)$ and we obtain the positivity of $u^{k+1}$.

Let us now prove that $u^{k+1}\leq u^m$. Therefore, we multiply \eqref{systSD_u} by  $(u^{k+1}-u^m)^+=\max((u^{k+1}-u^m),0)$ and we integrate over $(0,1)$. It yields:
\begin{multline}\label{maximum0}
\frac{\varepsilon_u}{\Delta t}\int_0^1 \left( (u^{k+1}-u^m)^+\right)^2 dx +\frac{\varepsilon_u}{\Delta t}\int_0^1 \left(u^{m}-u^k\right) (u^{k+1}-u^m)^+ dx \\
+\int_0^1\left(\partial_x (u^{k+1}-u^m)^+\right)^2dx+z_u\int_0^1 \partial_x (u^{k+1}-u^m)^+u^{k+1}\pa_x\Psi^k dx\\
+ J_u^k(1)(u^{k+1}(1)-u^m)^- -
J_u^k(0)(u^{k+1}(0)-u^m)^+=0. 
\end{multline} 
But, as $\partial_x (u^{k+1}-u^m)^+u^{k+1}=\frac{1}{2}\pa_x\left(((u^{k+1}-u^m)^{+})^{2}+2u^m (u^{k+1}-u^m)^+\right)$, integrating by parts and using \eqref{systSD_Psi}, we get:
\begin{multline}\label{maximum1}
\int_0^1 \partial_x (u^{k+1}-u^m)^+u^{k+1}\pa_x\Psi^k dx=\\
\frac{1}{2\lambda^2}\int_0^1\left(((u^{k+1}-u^m)^+)^2+2u^m(u^{k+1}-u^m)^+\right)\left(z_P(P^k-P^m)+z_N(N^k-N^m)\right) dx \\
+\frac{1}{2}\left((u^{k+1}(1)-u^m)^{+2}+2u^m(u^{k+1}(1)-u^m)^+\right)\pa_x\Psi^k(1)\\
-\frac{1}{2}\left((u^{k+1}(0)-u^m)^{+2}+2u^m(u^{k+1}(0)-u^m)^+\right)\pa_x\Psi^k(0).
\end{multline}
{Again}, using the induction hypothesis and the fact that $z_Nz_P<0$, we have
\begin{multline*}
\frac{z_u}{2\lambda^2}\int_0^1\left((u^{k+1}-u^m)^{+2}+2u^m(u^{k+1}-u^m)^+\right)\left(z_P(P^k-P^m)+z_N(N^k-N^m)\right) dx\\
\geq \frac{z_u^2}{2\lambda^2}\int_0^1 (u^{k+1}-u^m)^{+2}(u^k-u^m) dx \\
+ \frac{z_u^2u^m}{\lambda^2}\int_0^1 (u^{k+1}-u^m)^+(u^k-u^m) dx.
\end{multline*}
Combined with \eqref{maximum0} and \eqref{maximum1}, it yields:
\begin{multline}\label{maximum2}
\frac{1}{\Delta t} \int_0^1 (u^{k+1}-u^m)^{+2}\left(\varepsilon_u-\frac{z_u^2\Delta t}{2\lambda^2}(u^m-u^{k})\right) dx \\
+\frac{1}{\Delta t}\left(\varepsilon_u-\frac{z_u^2\Delta t u^m}{\lambda^2}\right) \int_0^1 (u^{m}-u^{k})(u^{k+1}-u^m)^+ dx \\
+ \int_0^1 \left(\partial_x \left(u^{k+1}-u^m\right)^+\right)^2 dx \leq g_u^1+g_u^0,
\end{multline}
with
\begin{align*}
g_u^0&=J_u(0)(u^{k+1}(0)-u^m)^++\frac{z_u}{2}\left((u^{k+1}(0)-u^m)^{+2}+2u^m(u^{k+1}(0)-u^m)^+\right)\pa_x\Psi^k(0),\\
g_u^1&=-J_u(1)(u^{k+1}(1)-u^m)^+-\frac{z_u}{2}\left((u^{k+1}(1)-u^m)^{+2}+2u^m(u^{k+1}(1)-u^m)^+\right)\pa_x\Psi^k(1).
\end{align*}
As we did for $f_u^0$ and $f_u^1$, we use the boundary conditions \eqref{systSD_CLpsi} and \eqref{systSD_CLu} and the definition of the functions $(\xi_u^i)_{i=0,1}$ in order to rewrite $g_u^0$ and $g_u^1$ : 
\begin{align*} 
g_u^0&=\ds\frac{(u^{k+1}(0)-u^m)^{+2}}{2u^m}\left(\xi_u^0(\Psi^k(0))-u^m\beta_u^0(\Psi^k(0))-\gamma_u^0(\Psi^k(0))\right)\\
&\hspace{8.cm}+\xi_u^0(\Psi^k(0))(u^{k+1}(0)-u^m)^+,\\
g_u^1&=\ds\frac{(u^{k+1}(1)-u^m)^{+2}}{2u^m}\left(\xi_u^1(V-\Psi^k(1))-u^m\beta_u^1(V-\Psi^k(1))-\gamma_u^1(V-\Psi^k(1))\right)\\
&\hspace{8.cm}+\xi_u^1(V-\Psi^k(1))(u^{k+1}(1)-u^m)^+.
\end{align*}
It is now clear that $g_u^0\leq 0$ and $g_u^1\leq 0$ and therefore, thanks to the hypothesis on the time step \eqref{conddt}, the inequality \eqref{maximum2} implies $(u^{k+1}-u^m)^{+}=0$ 
and $u^{k+1}\leq u^m$ for all $x\in(0,1)$.
It  ends the proof of Proposition \ref{prop_Linf}.
\end{proof}

\subsection{$L^2(0,T,H^1)$-estimate on $\Psi_{\Delta t}$}

\begin{prop} \label{estH1Psiprop}
Under the assumptions of Proposition \ref{prop_Linf}, there exists a constant $C$ {depending only on the data of the problem (but not on $\Delta t$)} such that:
\begin{equation}
\label{estH1Psi}
\vertt \Psi^k \vertt_{H^1((0,1))}  \leq C,\quad \forall 1\leq k\leq K,
\end{equation}
and
\begin{equation}\label{estH1PsiD}
\|\Psi_{\Delta t}\|_{L^2(0,T,H^1(0,1))}\leq C \sqrt{T}.
\end{equation}
\end{prop}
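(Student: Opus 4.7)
The plan is to test the variational formulation \eqref{poissonfaibleSD} for $\Psi^k$ against the solution itself, exploit the norm equivalence \eqref{eq-norms}, and use the $L^\infty$ estimates from Proposition \ref{prop_Linf} to close the estimate uniformly in $k$ and $\Delta t$. The passage from a pointwise-in-$k$ bound to the $L^2(0,T;H^1)$ bound on $\Psi_{\Delta t}$ will then be an immediate summation.

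Concretely, I would take $\varphi = \Psi^k$ in \eqref{poissonfaibleSD}. The left-hand side becomes
\[
\lambda^2 a_\Psi(\Psi^k,\Psi^k) = \lambda^2\Bigl(\int_0^1 (\partial_x \Psi^k)^2\,dx + \frac{1}{\alpha_1}\Psi^k(1)^2 + \frac{1}{\alpha_0}\Psi^k(0)^2\Bigr) = \lambda^2 \vertt \Psi^k\vertt_{H^1}^2,
\]
which is coercive in the norm $\vertt \cdot \vertt_{H^1}$. On the right-hand side, the source $3P^k - N^k + \rho_{hl}$ is bounded in $L^\infty(0,1)$ by a constant $M$ depending only on $P^m$, $N^m$ and $\rho_{hl}$ thanks to \eqref{estlinf}, so by Cauchy--Schwarz and \eqref{eq-norms},
\[
\Bigl|\int_0^1(3P^k-N^k+\rho_{hl})\Psi^k\,dx\Bigr| \leq M\,\|\Psi^k\|_{L^2} \leq M\kappa\,\vertt \Psi^k\vertt_{H^1}.
\]
The two boundary contributions in $b_\Psi^k(\Psi^k)$ are handled by Young's inequality, e.g.
\[
\frac{\lambda^2}{\alpha_0}|\Delta\Psi_0^{pzc}|\,|\Psi^k(0)| \leq \frac{\lambda^2}{2\alpha_0}\Psi^k(0)^2 + \frac{\lambda^2}{2\alpha_0}(\Delta\Psi_0^{pzc})^2,
\]
and analogously at $x=1$. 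The quadratic boundary terms are absorbed into $\lambda^2 a_\Psi(\Psi^k,\Psi^k)$, while the $L^2$ contribution is absorbed by one more Young inequality $M\kappa\,\vertt\Psi^k\vertt_{H^1} \leq \tfrac{\lambda^2}{2}\vertt\Psi^k\vertt_{H^1}^2 + C$. This yields
\[
\tfrac{\lambda^2}{2}\vertt \Psi^k\vertt_{H^1}^2 \leq C,
\]
with $C$ depending only on the data, proving \eqref{estH1Psi}.

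For \eqref{estH1PsiD}, I use the definition \eqref{solapp} of $\Psi_{\Delta t}$ as a piecewise constant function in time: since $\Psi_{\Delta t}(t,\cdot) = \Psi^k$ on $(t_{k-1},t_k]$,
\[
\|\Psi_{\Delta t}\|_{L^2(0,T;H^1(0,1))}^2 = \sum_{k=1}^K \Delta t\,\|\Psi^k\|_{H^1}^2 \leq \kappa^2\sum_{k=1}^K \Delta t\,\vertt\Psi^k\vertt_{H^1}^2 \leq \kappa^2 C^2\,K\Delta t = \kappa^2 C^2 T,
\]
from which \eqref{estH1PsiD} follows after relabeling the constant.

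The only subtle point is making sure every step is independent of $\Delta t$: coercivity, the $L^\infty$ bound $M$, the equivalence constant $\kappa$, and the data in the boundary terms are all $\Delta t$-independent, and no term involving $1/\Delta t$ ever appears because the Poisson equation \eqref{systSD_Psi} is stationary. This is why the $H^1$ estimate on $\Psi^k$ is, in fact, the easy one among the a priori estimates to come.
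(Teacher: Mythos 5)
Your approach matches the paper's almost line by line: test \eqref{poissonfaibleSD} with $\varphi=\Psi^k$, use the equivalent norm $\vertt\cdot\vertt_{H^1}$ for coercivity, bound the source via the $L^\infty$ estimate \eqref{estlinf}, absorb the boundary and volume contributions by Young's inequality, and sum in $k$ to get the $L^2(0,T;H^1)$ bound. One slip to fix: the asserted equality
\[
a_\Psi(\Psi^k,\Psi^k)=\int_0^1(\partial_x\Psi^k)^2\,dx+\frac{1}{\alpha_1}\Psi^k(1)^2+\frac{1}{\alpha_0}\Psi^k(0)^2
=\vertt\Psi^k\vertt_{H^1}^2
\]
is false unless $\alpha_0=\alpha_1=1$; what you actually have is the coercivity inequality $a_\Psi(\Psi^k,\Psi^k)\geq\eta\,\vertt\Psi^k\vertt_{H^1}^2$ with $\eta=\min(1,1/\alpha_0,1/\alpha_1)$, exactly as the paper records. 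This affects only the bookkeeping of constants: your final Young step should read $M\kappa\,\vertt\Psi^k\vertt_{H^1}\leq\tfrac{\lambda^2\eta}{2}\vertt\Psi^k\vertt_{H^1}^2+C$ (with the $\eta$ carried along), after which the absorption and the $\sqrt{T}$ scaling go through unchanged. Everything else, including your observation that no $1/\Delta t$ appears because \eqref{systSD_Psi} is stationary, is correct and is precisely the reason the estimate is uniform in $\Delta t$.
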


\begin{proof}
Applying \eqref{poissonfaibleSD} with ${\varphi}=\Psi^k$, we get $a_\Psi(\Psi^k,\Psi^k)=b_\Psi^k(\Psi^k)/\lambda^2$. Setting ${\eta}=\min (1,1/\alpha_0,1/\alpha_1)$, we have 
\begin{equation}\label{estimpsi0}
a_\Psi(\Psi^k,\Psi^k)\geq {\eta} \vertt \Psi^k\vertt^2.
\end{equation}
Using Young and Cauchy-Schwarz inequalities, we get 
\begin{multline}\label{estimpsi1}
\ds\frac{1}{\lambda^2}\vert b_\Psi^k(\Psi^k)\vert \leq \frac{{\eta}}{2} \vertt \Psi^k\vertt^2 +\frac{1}{2{\eta}\lambda^2}\int_0^1(3(P^k-P^m)+(N^k-N^m))^2dx\\
+\frac{1}{2{\eta}} (V-\Delta\Psi_1^{pzc})^2+\frac{1}{2{\eta}}(\Delta\Psi_0^{pzc})^2.
\end{multline}
Then, using the $L^{\infty}$-estimate \eqref{estlinf}, we deduce \eqref{estH1Psi} from \eqref{estimpsi0} and \eqref{estimpsi1}.
The obtention of \eqref{estH1PsiD} is straightforward.

\end{proof}

\begin{remark}\label{rem_estPsi}
Let us note that, due to the compact embedding from $H^1(0,1)$ to ${\mathcal C}([0,1])$, the estimate \eqref{estH1Psi} imply that $\Psi^k$ (for all $k$)  is uniformly bounded on the interval $[0,1]$ by a constant depending only on the data of the problem.
\end{remark}

\subsection{$L^2(0,T,H^1)$-estimates on $P_{\Delta t}, N_{\Delta t}$}

Note that using \eqref{estlinfNDPD}, we easily obtain $L^2(0,T;L^2(0,1))$ estimates for $P_{\Delta t}$ and $N_{\Delta t}$ which imply the weak convergence of a sequence of approximate solutions in $L^2(0,T;L^2(0,1))$. But since we have to pass to the limit also in the space derivatives of $P_{\Delta t}$ and $N_{\Delta t}$, it is not sufficient and we need estimates in $L^2(0,T;H^1(0,1))$. They are given in Proposition~\ref{estH1NP}.

\begin{prop} \label{estH1NP}
 Under the assumptions of Proposition \ref{prop_Linf}, there exists a constant $C$, depending only on the data of the problem (but not on $\Delta t$) such that
\begin{equation}
\label{estl2H1}
\|P_{\Delta t} \|_{L^2(0,T;H^1(0,1))} \leq C,~\|N_{\Delta t} \|_{L^2(0,T;H^1(0,1))} \leq C.
\end{equation}
\end{prop}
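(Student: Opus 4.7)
The plan is to run a standard energy estimate on the semi-discrete equation for $u^{k+1} \in \{P^{k+1},N^{k+1}\}$ tested against $u^{k+1}$ itself, then multiply by $\Delta t$ and sum telescopically in $k$, using Proposition~\ref{prop_Linf} to control the drift coefficient and Proposition~\ref{estH1Psiprop} (together with Remark~\ref{rem_estPsi}) to absorb the terms involving $\Psi^k$.

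Concretely, I would multiply \eqref{systSD_u} by $u^{k+1}$, integrate on $(0,1)$ and perform one integration by parts. Using the elementary convexity inequality
\[
\frac{\varepsilon_u}{\Delta t}(u^{k+1}-u^k)u^{k+1} \ge \frac{\varepsilon_u}{2\Delta t}\bigl((u^{k+1})^2-(u^k)^2\bigr),
\]
and replacing $J_u^k$ at the endpoints by the boundary laws \eqref{systSD_CLu0}--\eqref{systSD_CLu1}, the identity becomes
\[
\frac{\varepsilon_u}{2\Delta t}\int_0^1\!\!\bigl((u^{k+1})^2-(u^k)^2\bigr)dx+\int_0^1 (\partial_x u^{k+1})^2 dx+ z_u\!\int_0^1 u^{k+1}\partial_x u^{k+1}\partial_x\Psi^k dx + B^k \le 0,
\]
where $B^k$ collects the boundary contributions
\[
B^k=\beta_u^1(V-\Psi^k(1))(u^{k+1}(1))^2+\beta_u^0(\Psi^k(0))(u^{k+1}(0))^2-\gamma_u^1(V-\Psi^k(1))u^{k+1}(1)-\gamma_u^0(\Psi^k(0))u^{k+1}(0).
\]
The two $\beta$-terms are nonnegative by \eqref{hyp_mk}, and Remark~\ref{rem_estPsi} together with Proposition~\ref{prop_Linf} ensure that the remaining $\gamma$-terms are bounded in absolute value by a constant $C$ independent of $k$ and $\Delta t$.

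The drift term is the main technical point: Young's inequality combined with the $L^\infty$ bound $0\le u^{k+1}\le u^m$ from Proposition~\ref{prop_Linf} gives
\[
\Bigl|z_u\!\int_0^1 u^{k+1}\partial_x u^{k+1}\partial_x\Psi^k dx\Bigr|\le \frac12\int_0^1(\partial_x u^{k+1})^2 dx + \frac{z_u^2(u^m)^2}{2}\int_0^1(\partial_x\Psi^k)^2 dx,
\]
so half of the diffusion term is absorbed on the left and the right-hand side only depends on $\|\partial_x\Psi^k\|_{L^2}^2$ and on the bounded boundary contribution. Multiplying by $\Delta t$ and summing from $k=0$ to $K-1$, the time-difference term telescopes to $\tfrac{\varepsilon_u}{2}\int_0^1((u^K)^2-(u^0)^2)dx$, which is controlled by $u^m$ via Proposition~\ref{prop_Linf} and the hypothesis \eqref{CondCINP}, while the sum $\sum_k \Delta t \int_0^1(\partial_x\Psi^k)^2 dx$ equals $\|\partial_x\Psi_{\Delta t}\|_{L^2(0,T;L^2(0,1))}^2$ and is controlled by Proposition~\ref{estH1Psiprop}. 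One obtains
\[
\sum_{k=0}^{K-1}\Delta t\int_0^1(\partial_x u^{k+1})^2 dx \le C(T),
\]
which, combined with the $L^2(0,T;L^2(0,1))$ bound that follows immediately from \eqref{estlinfNDPD}, yields \eqref{estl2H1}.

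I expect the only subtlety to be the sign-handling of the boundary term $B^k$ and the correct use of $0\le u^{k+1}\le u^m$ to get a $k$-independent constant for the $\gamma$-contributions; no further compactness or monotonicity argument should be needed, since the drift is tamed entirely by Young's inequality once the uniform $L^\infty$ bound of Proposition~\ref{prop_Linf} is available.
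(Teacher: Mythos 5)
Your argument is correct, but it handles the two delicate terms by a genuinely different (and more elementary) route than the paper. For the interior drift term $z_u\int u^{k+1}\,\partial_x u^{k+1}\,\partial_x\Psi^k\,dx$, the paper performs a \emph{second} integration by parts, writing $u^{k+1}\partial_x u^{k+1}=\tfrac12\partial_x\bigl((u^{k+1})^2\bigr)$, and then uses the discrete Poisson equation \eqref{systSD_Psi} together with \eqref{hyp_NPrho} to replace $\partial_{xx}\Psi^k$ by $-(z_P(P^k-P^m)+z_N(N^k-N^m))/\lambda^2$; the resulting term is then bounded directly from the $L^\infty$ estimates, and no Young inequality (hence no loss of diffusion) is needed. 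You instead treat the drift as a perturbation: Young's inequality absorbs half the diffusion and leaves a factor $\|\partial_x\Psi^k\|^2_{L^2}$ which you then sum using Proposition~\ref{estH1Psiprop}. Both routes close; yours needs the full $H^1$ bound on $\Psi^k$ where the paper only uses the $L^\infty$ bound from Remark~\ref{rem_estPsi}, but yours avoids invoking the Poisson equation inside this estimate. For the boundary terms, the paper re-uses the auxiliary functions $\xi_u^i$ and their nonpositivity (which hinges on the hypotheses \eqref{condpsi}) to show $h_u^0\le\gamma_u^0(\Psi^k(0))u^{k+1}(0)$, whereas you observe more simply that the $\beta$-contributions in $B^k$ are nonnegative (by \eqref{hyp_mk}) and can be discarded, while the $\gamma$-contributions are bounded via Remark~\ref{rem_estPsi} and the $L^\infty$ bounds. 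Your boundary handling is lighter and bypasses the $\xi_u^i$ machinery entirely for this proposition; the end result, after the same telescoping summation in $k$, is the same. One small bookkeeping remark: when you identify $\sum_{k=0}^{K-1}\Delta t\int_0^1(\partial_x\Psi^k)^2\,dx$ with $\|\partial_x\Psi_{\Delta t}\|^2_{L^2(0,T;L^2)}$ there is a shift of one index, since $\Psi_{\Delta t}$ is built from $\Psi^1,\dots,\Psi^K$; this is harmless because the uniform bound \eqref{estH1Psi} holds for $k=0$ as well, but it is worth stating.
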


\begin{proof}

Let us multiply \eqref{systSD_u} by $u^{k+1}$ and integrate over $(0,1)$. 
Since
$$u^{k+1}\left(u^{k+1}-u^k\right) \geq \frac{1}{2}\left(u^{k+1}\right)^2-\frac{1}{2}\left(u^{k}\right)^2,$$
we obtain 
$$
\frac{\varepsilon_u}{2\Delta t} \int_0^1 \left((u^{k+1})^2-(u^k)^2\right)dx+\ds\int_0^1\pa_x {J_u^k} u^{k+1} dx\leq 0.
$$
But, integrating by parts the term $\ds\int_0^1\pa_x {J_u^k} u^{k+1} dx$ and using \eqref{systSD_Psi}, we get 
\begin{multline}\label{estimNP0}
\frac{\varepsilon_u}{2\Delta t} \int_0^1 \left((u^{k+1})^2-(u^k)^2\right)dx+\int_0^1 \left(\pa_x u^{k+1}\right)^2 dx\\
+\frac{z_u}{2\lambda^2}\int_0^1(u^{k+1})^2(z_P(P^k-P^m)+z_N(N^k-N^m))dx \leq {h_u^0+h_u^1},
\end{multline}
with 
\begin{align*}
{h_u^0}&=\frac{(u^{k+1}(0))^2}{2u^m}\left(\xi_u^0(\Psi^k(0))-{u^m}\beta_u^0(\Psi^k(0))-\gamma_u^0(\Psi^k(0))\right)+\gamma_u^0(\Psi^k(0))u^{k+1}(0),\\
{h_u^1}&=\frac{(u^{k+1}(1))^2}{2u^m}\left(\xi_u^1(V-\Psi^k(1))-{u^m}\beta_u^1(V-\Psi^k(1))-\gamma_u^1(V-\Psi^k(1))\right)\\
&\hspace{9.cm}+\gamma_u^1(V-\Psi^k(1))u^{k+1}(1).
\end{align*}
The sign properties of the functions $\xi_u^i$, $\beta_u^i$,$\gamma_u^i$ for $i=0,1$, ensure that 
$$
{h_u^0}\leq 
\gamma_u^0(\Psi^k(0))u^{k+1}(0)\mbox{  and }
{h_u^1}\leq \gamma_u^1(V-\Psi^k(1))u^{k+1}(1).
$$ Then, we can multiply \eqref{estimNP0} by $\Delta t$ and sum it over $k\in\left\{0,\ldots,K-1\right\}$. Using the $L^{\infty}$- estimate \eqref{estlinf}, we get 
\begin{multline}
\| \partial_x u_{\Delta t} \|_{L^2(0,T;L^2(0,1))}\leq \frac{\varepsilon_u}{2}\int_0^1 (u^0)^2(x) dx + \frac{z_u}{2\lambda^2}\sum_{k=0}^{K-1} \Delta t\int_0^1(u^{k+1})^2 (u^m-u^k) dx \\
+ \sum_{k=0}^{K-1} \Delta t\left(\gamma_u^0(\Psi^k(0))u^{k+1}(0) +\gamma_u^1(V-\Psi^k(1))u^{k+1}(1)\right)
\end{multline}
Thanks to \eqref{CondCINP}, \eqref{estlinf}, the continuity of the functions $(\gamma_u^i)_{i=0,1}$ and Remark \ref{rem_estPsi}, we obtain 
$$
\| \partial_x u_{\Delta t} \|_{L^2(0,T;L^2(0,1))}\leq C,
$$
where $C$ depends only on the data of the problem {and not on $\Delta t$}. As the $L^2(0,T;L^2(0,1))$-estimate of $u_{\Delta t}$ is a straightforward consequence of the $L^{\infty}$-estimate \eqref{estlinf}, it concludes the proof of Proposition \ref{estH1NP}.
\end{proof}

\section{Proof of Theorem \ref{existence} }

In this Section,  we first establish some compactness results and the convergence of a sequence of approximate solutions $(N_{\Delta t}, P_{\Delta t},\Psi_{\Delta t})_{\Delta t}$ when $\Delta t$ tends to 0. Then, by passing to the limit in the scheme  \eqref{systSD_init}--\eqref{systSD_CLu}, we prove that the triple $(N,P,\Psi)$ obtained as the limit of the sequence $(N_{\Delta t}, P_{\Delta t},\Psi_{\Delta t})_{\Delta t}$ is a weak solution to \eqref{DDsyst}--\eqref{CINP}.

\subsection{Compactness results} 
The estimates proven in  Propositions \ref{estH1Psiprop} and \ref{estH1NP} imply the weak convergence of a sequence of approximate solutions $(N_{\Delta t})_{\Delta t},~(P_{\Delta t})_{\Delta t}$ and $(\Psi_{\Delta t})_{\Delta t}$ in $L^2(0,T;H^1(0,1))$, when $\Delta t$ tends to zero. But,  the weak convergence of $(N_{\Delta t})_{\Delta t}$ and $(\Psi_{\Delta t})_{\Delta t}$ in $L^2(0,T,H^1(0,1))$ is not sufficient to pass to the limit in the convective terms $\int_0^T\int_0^1 N_{\Delta t} \partial_x \Psi_{\Delta t}\varphi dx $ or in the boundary terms. We need some strong convergence. To prove this result, we will use a compactness argument due to Simon \cite{Si87}. It is based on some estimates on the time translates of the approximate solutions. 
\\
Let us introduce the shift operator. For $u=P,N$ or $\Psi$, we set
$$\left(\sigma_{\Delta t}  u_{\Delta t}\right)(x,t)=u_{\Delta t}(x,t+\Delta t).
$$
It means that 
$
\left(\sigma_{\Delta t}  u_{\Delta t}\right)(x,t)=
u^{k+1}(x)$ for $x \in (0,1)$ and $t \in (t_{k-1},t_k]$.

\begin{prop}\label{est_translatees} Under the assumptions of Proposition \ref{prop_Linf}, there exists a constant $C$ depending only on the data of the problem (and not on $\Delta t$) such that: 
\begin{gather}
\|\sigma_{\Delta t} P_{\Delta t}-P_{\Delta t} \|_{L^2(0,T;L^2(0,1))} \leq C\Delta t,~\|\sigma_{\Delta t} N_{\Delta t} -N_{\Delta t} \|_{L^2(0,T;L^2(0,1))} \leq C \Delta t.\label{estshiftNP}\\
\mbox{ and } \|\sigma_{\Delta t} \Psi_{\Delta t}-\Psi_{\Delta t} \|_{L^2(0,T;L^2(0,1))} \leq C\Delta t.\label{estshiftPsi}
\end{gather}
\end{prop}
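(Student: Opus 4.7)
The plan is to prove the shift estimate \eqref{estshiftNP} first, by testing the semi-discrete equation \eqref{systSD_u} against the increment $u^{k+1}-u^k$ itself, and then to deduce \eqref{estshiftPsi} from the Poisson equation \eqref{systSD_Psi}. Starting from the weak formulation of \eqref{systSD_u}--\eqref{systSD_CLu}, an integration by parts yields, for any $\varphi\in H^1(0,1)$,
$$\frac{\varepsilon_u}{\Delta t}\int_0^1(u^{k+1}-u^k)\varphi\,dx = \int_0^1 J_u^k\,\partial_x\varphi\,dx - \bigl[J_u^k\varphi\bigr]_0^1,$$
with the boundary values of $J_u^k$ replaced via \eqref{systSD_CLu0}--\eqref{systSD_CLu1}. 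Choosing $\varphi = u^{k+1}-u^k$ leads to the key identity
$$\varepsilon_u\|u^{k+1}-u^k\|_{L^2}^2 = \Delta t\!\int_0^1 J_u^k\,\partial_x(u^{k+1}-u^k)\,dx - \Delta t\bigl[J_u^k(u^{k+1}-u^k)\bigr]_0^1.$$

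I will bound the right-hand side term by term. The interior integral is handled by Cauchy--Schwarz, using that $J_u^k = -\partial_x u^{k+1} - z_u u^{k+1}\partial_x\Psi^k$ satisfies $\|J_u^k\|_{L^2} \leq \|\partial_x u^{k+1}\|_{L^2} + u^m\|\partial_x\Psi^k\|_{L^2}$ by Proposition \ref{prop_Linf}. The boundary contribution is controlled by rewriting $J_u^k(0) = \gamma_u^0(\Psi^k(0)) - \beta_u^0(\Psi^k(0))u^{k+1}(0)$ and its analogue at $x=1$, both of which are uniformly bounded by a constant independent of $k$ and $\Delta t$ thanks to Proposition \ref{prop_Linf}, Remark \ref{rem_estPsi}, and the continuity of $\beta_u^i,\gamma_u^i$, while $|(u^{k+1}-u^k)(i)|\leq 2u^m$. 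The resulting per-step inequality takes the form $\|u^{k+1}-u^k\|_{L^2}^2 \leq C\Delta t\bigl(\|J_u^k\|_{L^2}\,\|\partial_x(u^{k+1}-u^k)\|_{L^2}+1\bigr)$. Summing over $k$ and applying a discrete Cauchy--Schwarz in $k$ reduces the task to bounding $\sum_k \Delta t\,\|J_u^k\|_{L^2}^2$ and $\sum_k \Delta t\,\|\partial_x(u^{k+1}-u^k)\|_{L^2}^2$, both of which are $O(1)$ thanks to Propositions \ref{estH1Psiprop} and \ref{estH1NP}. This gives $\sum_k \Delta t\,\|u^{k+1}-u^k\|_{L^2}^2 \leq C\Delta t$, i.e.\ exactly the (squared) $L^2(0,T;L^2(0,1))$-norm announced in \eqref{estshiftNP}.

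For \eqref{estshiftPsi}, the difference $w^k := \Psi^{k+1}-\Psi^k$ solves $-\lambda^2\partial_{xx}w^k = 3(P^{k+1}-P^k)-(N^{k+1}-N^k)$ with homogeneous Robin boundary data. Testing its weak form against $w^k$ itself, using the coercivity estimate \eqref{estimpsi0} of $a_\Psi$ together with the equivalence of norms \eqref{eq-norms}, delivers $\|w^k\|_{L^2} \leq C\bigl(\|P^{k+1}-P^k\|_{L^2}+\|N^{k+1}-N^k\|_{L^2}\bigr)$; squaring, multiplying by $\Delta t$, and summing in $k$ transfers \eqref{estshiftNP} into \eqref{estshiftPsi}. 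The delicate step is the summation over $k$ for the carriers: since only time-summed quantities are controlled by the $L^2(H^1)$ estimates, both $\|J_u^k\|_{L^2}$ and $\|\partial_x(u^{k+1}-u^k)\|_{L^2}$ may individually be as large as $O(\Delta t^{-1/2})$; only a discrete Cauchy--Schwarz in $k$, rather than a term-by-term bound, will recover the correct $\Delta t$ dependence in the final estimate.
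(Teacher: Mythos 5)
Your route to \eqref{estshiftNP} is genuinely different from the paper's: you test the semi-discrete equation against the increment $u^{k+1}-u^k$ and sum with a discrete Cauchy--Schwarz in $k$, whereas the paper reads the bound directly off the strong form of \eqref{systSD_u}, that is from $\sigma_{\Delta t}u_{\Delta t}-u_{\Delta t}=-\frac{\Delta t}{\varepsilon_u}\partial_x J_{u,\Delta t}$, and then, after Minkowski and H\"older, ends up estimating as though $\|\partial_x J_{u,\Delta t}\|_{L^2}$ could be replaced by $\|J_{u,\Delta t}\|_{L^2}$ --- a step that would require a uniform $H^2$-in-space bound on $u^{k+1}$ which is nowhere established. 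Your energy method avoids this leap and only invokes the $L^2(0,T;H^1(0,1))$ and $L^\infty$ controls actually proved, which is cleaner.

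However, be careful with the exponent you end up with. The summed inequality gives $\sum_k\|u^{k+1}-u^k\|_{L^2}^2\leq C$ uniformly in $\Delta t$, hence $\|\sigma_{\Delta t}u_{\Delta t}-u_{\Delta t}\|_{L^2(0,T;L^2(0,1))}^2=\Delta t\sum_k\|u^{k+1}-u^k\|_{L^2}^2\leq C\Delta t$, so the norm itself is $O(\sqrt{\Delta t})$, not $O(\Delta t)$. Your closing sentence, ``exactly the (squared) $L^2(0,T;L^2(0,1))$-norm announced in \eqref{estshiftNP}'', confuses the norm with its square: \eqref{estshiftNP} bounds the \emph{unsquared} norm by $C\Delta t$. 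As written you therefore prove a strictly weaker rate than the proposition states. This does not endanger the paper's main theorem, since Simon's compactness argument in Proposition \ref{convergence} only needs the time translates to tend to zero and $\sqrt{\Delta t}\to 0$ suffices, but you should either downgrade the announced rate to $C\sqrt{\Delta t}$ or explain where the missing factor of $\sqrt{\Delta t}$ would come from. Your deduction of \eqref{estshiftPsi} from \eqref{estshiftNP} via the homogeneous Robin problem satisfied by $\Psi^{k+1}-\Psi^k$ coincides with the paper's argument.
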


\begin{proof}
Using equation \eqref{systSD_u}, Minkowsky inequality and H\"older inequality, we obtain for $u=N,~P$
\begin{eqnarray} 
\|\sigma_{\Delta t} u_{\Delta t}-u_{\Delta t} \|_{L^2(0,T;L^2(0,1))} &\leq& {\frac{\Delta t} {\varepsilon_u}}\left(\|\partial_x\sigma_{\Delta t} u_{\Delta t}\|_{L^2(0,T;L^2(0,1))}  \right. \nonumber \\
&{}&\left.+ |z_u| \|\partial_x \Psi_{\Delta t}\|_{L^2(0,T;L^2(0,1))} \|\sigma_{\Delta t} u_{\Delta t}\|_{L^\infty(0,T;L^\infty(0,1))} \right),\nonumber \\
&\leq&{\frac{\Delta t}{\varepsilon_u}}\left(\| u_{\Delta t}\|_{L^2(0,T;H^1(0,1))}  \right. \nonumber \\
&{}&\left.+ |z_u| \| \Psi_{\Delta t}\|_{L^2(0,T;H^1(0,1))} \| u_{\Delta t}\|_{L^\infty(0,T;L^\infty(0,1))} \right).\label{proofpropsec3_5} 
\end{eqnarray}
Using estimates \eqref{estlinfNDPD}, \eqref{estH1PsiD} and \eqref{estl2H1} in \eqref{proofpropsec3_5}, we obtain \eqref{estshiftNP}. 




By definition $\left(\sigma_{\Delta t} \Psi_{\Delta t}- \Psi_{\Delta t}\right)(t,x)=\Psi^{k+1}(x)-\Psi^k(x)$ for t $\in (t_{k-1},t_k]$. Using \eqref{systSD_Psi}, \eqref{systSD_CLpsi}, we have
\begin{eqnarray}
-\lambda^2 \partial_{xx} \left(\Psi^{k+1}-\Psi^k\right)=3\left(P^{k+1}-P^k\right)-\left(N^{k+1}-N^k\right), ~~x \in (0,1), \label{proofpropsec3_6_1} \\
\left(\Psi^{k+1}-\Psi^k\right)(0)-\alpha_0 \partial_x\left(\Psi^{k+1}-\Psi^k\right)(0)=0, \label{proofprop_sec3_6_2} \\
\left(\Psi^{k+1}-\Psi^k\right)(1)+\alpha_1 \partial_x\left(\Psi^{k+1}-\Psi^k\right)(1)=0. \label{proofprop_sec3_6_3} 
\end{eqnarray}
Multiplying \eqref{proofpropsec3_6_1} {by $\Psi^{k+1}-\Psi^k$} and integrating over {$(0,1)$}, we obtain, {after an} integration by parts:
\begin{eqnarray}
\lambda^2 \int_0^1 \left[\partial_x\left(\Psi^{k+1}-\Psi^k\right)\right]^2 dx +  \frac{\lambda^2}{\alpha_1}\left(\left(\Psi^{k+1}-\Psi^k\right)(1)\right)^2+\frac{\lambda^2}{\alpha_0}\left(\left(\Psi^{k+1}-\Psi^k\right)(0)\right)^2\nonumber \\
= 3 \int_0^1 \left(P^{k+1}-P^k\right)\left(\Psi^{k+1}-\Psi^k\right) dx +\int_0^1 \left(N^{k+1}-N^k\right)\left(\Psi^{k+1}-\Psi^k\right) dx.
\label{proofpropsec3_6_5}
\end{eqnarray}
First, we note that the left hand side in \eqref{proofpropsec3_6_5} is lower bounded by $\frac{\lambda^2\eta}{\kappa} \Vert \Psi^{k+1}-\Psi^k\Vert^2_{H^1}$ with $\eta=\min (1,1/\alpha_0,1/\alpha_1)$ and $\kappa$ defined in \eqref{eq-norms}. Then, using Cauchy-Schwarz and Young inequalities, we obtain the following upper bound for the right hand side in \eqref{proofpropsec3_6_5}: 
$$
\frac{\lambda^2\eta}{2\kappa} \Vert \Psi^{k+1}-\Psi^k\Vert^2_{H^1}+ \frac{9\kappa}{2\lambda^2\eta}
\int_0^1 \left(P^{k+1}-P^k\right)^2 dx+\frac{\kappa}{2\lambda^2\eta}
\int_0^1 \left(N^{k+1}-N^k\right)^2 dx
$$
It yields
$$
\frac{\lambda^2\eta}{2\kappa} \Vert \Psi^{k+1}-\Psi^k\Vert^2_{L^2(0,1)}
\leq \frac{9\kappa}{2\lambda^2\eta}\left(
\int_0^1 \left(P^{k+1}-P^k\right)^2 dx+
\int_0^1 \left(N^{k+1}-N^k\right)^2 dx\right).
$$ 
Summing over $1\leq k\leq K$ and using the estimates \eqref{estshiftNP}, we deduce \eqref{estshiftPsi}. 
It concludes the proof of Proposition \ref{est_translatees}.
\end{proof}

Finally, thanks to the a priori estimates (Propositions \ref{prop_Linf},  \ref{estH1Psiprop} and \ref{estH1NP}) and the estimates on the time translates (Propositions \ref{est_translatees} ), we obtain the convergence of a subsequence of approximate solutions.

\begin{prop}\label{convergence}
Under the assumptions of Theorem \ref{existence}, we consider a sequence of approximate solutions $(P_{\Delta t}, N_{\Delta t}, \Psi_{\Delta t})_{\Delta t}$ defined by the semi-discrete scheme \eqref{systSD_init}--\eqref{systSD_CLu} and \eqref{solapp}. 
Then, there exists $(P,N)\in L^{\infty}([0,T]\times (0,1))\cap L^2(0,T,H^1(0,1))$ and $\Psi\in L^2(0,T,H^1(0,1))$ such that, up to a subsequence, ($u=P, ~N$)
\begin{itemize}
	\item $\Psi_{\Delta t} \rightarrow \Psi$ strongly in $L^2(0,T;\cC([0,1]))$,
	\item $u_{\Delta t} \rightarrow u$ strongly in $L^2(0,T;\cC([0,1]))$,
	\item $\partial_x u_{\Delta t} \rightharpoonup u$ weakly in $L^2(0,T;L^2(0,1))$, $\sigma_{\Delta t} u_{\Delta t} \rightharpoonup u$ weakly in $L^2(0,T;L^2(0,1))$,
	\item $\partial_x\Psi_{\Delta t} \rightharpoonup \partial_x\Psi$ weakly in $L^2(0,T;L^2(0,1))$.
\end{itemize}
\end{prop}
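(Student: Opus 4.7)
The plan is to apply the compactness result of Simon \cite{Si87} in its standard $L^p$-in-time formulation. The a priori estimates of Propositions \ref{estH1Psiprop} and \ref{estH1NP} provide uniform bounds of $(P_{\Delta t})$, $(N_{\Delta t})$, $(\Psi_{\Delta t})$ in $L^2(0,T;H^1(0,1))$, while Proposition \ref{est_translatees} yields the required uniform smallness of the time translates in $L^2(0,T;L^2(0,1))$. Since the Sobolev embedding $H^1(0,1) \hookrightarrow \mathcal{C}([0,1])$ is compact (by Rellich--Kondrachov, or directly via Arzel\`a--Ascoli using the uniform H\"older continuity of $H^1$ functions in one dimension), Simon's theorem applies with the triple $H^1(0,1) \hookrightarrow \mathcal{C}([0,1]) \hookrightarrow L^2(0,1)$. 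It ensures that each of these sequences is relatively compact in $L^2(0,T;\mathcal{C}([0,1]))$, and extracting a common diagonal subsequence furnishes limits $P$, $N$, $\Psi$ together with the first two strong convergences announced.

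Next, I would handle the weak convergences. Since $(\partial_x u_{\Delta t})$ and $(\partial_x \Psi_{\Delta t})$ are bounded in $L^2(0,T;L^2(0,1))$, Banach--Alaoglu gives weakly convergent subsequences. The limits are identified as $\partial_x u$ and $\partial_x \Psi$ by noting that the strong convergence $u_{\Delta t}\to u$ in $L^2(0,T;L^2(0,1))$ (itself a consequence of the strong convergence in $L^2(0,T;\mathcal{C}([0,1]))$) implies convergence in $\mathcal{D}'((0,T)\times(0,1))$, and since differentiation is continuous in the distributional sense, the weak $L^2$-limit of $\partial_x u_{\Delta t}$ must coincide with $\partial_x u$; the same argument applies to $\Psi_{\Delta t}$. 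For the shifted sequence, the triangle inequality together with the estimate \eqref{estshiftNP} gives $\|\sigma_{\Delta t} u_{\Delta t}-u\|_{L^2(0,T;L^2(0,1))} \leq \|\sigma_{\Delta t} u_{\Delta t}-u_{\Delta t}\|_{L^2(0,T;L^2(0,1))}+\|u_{\Delta t}-u\|_{L^2(0,T;L^2(0,1))} \to 0$, so $\sigma_{\Delta t} u_{\Delta t}\to u$ strongly, a fortiori weakly, in $L^2(0,T;L^2(0,1))$.

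Finally, the regularity of the limits must be verified. The pointwise bounds \eqref{estlinfNDPD} pass through weak-$*$ convergence in $L^\infty([0,T]\times(0,1))$ (up to a further extraction), so $0\leq P\leq P^m$ and $0\leq N\leq N^m$ almost everywhere, and the $L^\infty$-membership is preserved; the $L^2(0,T;H^1(0,1))$-regularity of $P,N,\Psi$ follows by lower semicontinuity of the norm under weak convergence. There is no genuine obstacle in this proposition: all the analytical work has already been carried out in the preceding a priori estimates, and the present statement is essentially a packaging of compactness theorems. The main point to be careful about is to select Simon's intermediate space $B=\mathcal{C}([0,1])$ rather than just $L^2(0,1)$, since convergence of traces at $x=0$ and $x=1$ will be needed later to pass to the limit in the Robin boundary terms of \eqref{DDufaible} and \eqref{poissonfaible}.
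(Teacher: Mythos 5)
Your proposal follows essentially the same route as the paper: Simon's compactness theorem with the triple $H^1(0,1)\hookrightarrow \mathcal{C}([0,1])\hookrightarrow L^2(0,1)$ combined with the time-translate estimates of Proposition~\ref{est_translatees} for the strong $L^2(0,T;\mathcal{C}([0,1]))$ convergences, Banach--Alaoglu and distributional identification for the weak $L^2$ convergence of the derivatives, and weak-$*$ compactness in $L^\infty$ for the uniform bounds. Your extra observation that the triangle inequality with \eqref{estshiftNP} upgrades $\sigma_{\Delta t}u_{\Delta t}\rightharpoonup u$ to strong $L^2$ convergence is a pleasant refinement that the paper leaves implicit, but it is not a genuinely different argument.
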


\begin{proof}
Firstly, due to Propositions \ref{estH1Psiprop} and \ref{estH1NP}, the sequences $(\Psi_{\Delta t})_{\Delta t}$ and $(u_{\Delta t})_{\Delta t}$ are bounded in $L^2(0,T;H^1(0,1))$. Then, there exist $\Psi$ and $u$ in $L^2(0,T;H^1(0,1))$ such that $\Psi_{\Delta t}$ tends to $\Psi$ and  $u_{\Delta t}$ tends to $u$ in $L^2(0,T;L^2(0,1))$. Therefore, up to a subsequence, we obtain the weak convergence of $(\partial_x\Psi_{\Delta t})_{\Delta t}$ to  $\partial_x \Psi$ and of $(\partial_x u_{\Delta t})_{\Delta t}$ to  $\partial_x u$ in $L^2(0,T;L^2(0,1))$.


Secondly, to obtain the strong convergence in $L^2(0,T;\cC([0,1]))$ of sequences $(\Psi_{\Delta t})_{\Delta t}$ and $(u_{\Delta t})_{\Delta t}$, we use a compactness argument. More precisely, we use Theorem 5 given by J. Simon in \cite{Si87}. Indeed, we have:
$$ H^1(0,1) \subset \cC([0,1]) \subset L^2(0,1)$$
with a compact injection from $H^1(0,1)$ to $\cC([0,1])$ (due to the one dimension). Moreover due to Propositions \ref{estH1Psiprop}, \ref{estH1NP} and \ref{est_translatees} the sequence $(f_{\Delta t})_{\Delta t}$ (for $f= \Psi,~u$) is bounded in $L^2(0,T;H^1(0,1))$  and 
$$\| \sigma_{\Delta t}f_{\Delta t}-f_{\Delta t}\|_{L^2(0,T;L^2(0,1))} \longrightarrow 0 \hbox{ uniformly when } \Delta t \rightarrow 0.$$
Then the sequence $(f_{\Delta t})_{\Delta t}$ is relatively compact in $L^2(0,T;\cC([0,1]))$, which gives the strong convergence of $(f_{\Delta t})_{\Delta t}$ in $L^2(0,T;\cC([0,1]))$.

Finally let us note that due to proposition \ref{prop_Linf}, sequences $(P_{\Delta t})_{\Delta t}$ and $(N_{\Delta t})_{\Delta t}$ are bounded in $L^\infty([0,T] \times (0,1))$ and we obtain their $\star$-weak convergence in $L^\infty([0,T] \times (0,1))$. Then their limits belong to $L^\infty([0,T] \times (0,1))$.

This completes the proof of proposition \ref{convergence}.

\end{proof}


\subsection{Passage to the limit}

In order to conclude the proof of Theorem \ref{existence}, it remains to prove that $(P,N,\Psi)$ obtained as a limit of the semi-discrete scheme (see Proposition \ref{convergence}) satisfies \eqref{DDufaible} and \eqref{poissonfaible}.
 
Let us first recall the results of the previous sections. Theorem \ref{existence_discret} gives the existence of a unique solution to the problem ($u=N,~P$)
\begin{equation}
\label{conv1}
\frac{\varepsilon_u}{\Delta t}\left(\sigma_{\Delta t}u_{\Delta t}- u_{\Delta t}\right)+\partial_x {J_{u,\Delta t}} = 0,~{J_{u,\Delta t}}=-\partial_x (\sigma_{\Delta t}u_{\Delta t}) - z_u (\sigma_{\Delta t}u_{\Delta t}) \partial_x \Psi_{\Delta t},
\end{equation}
\begin{equation}
\label{conv2}
-\lambda^2 \partial_{xx} \Psi_{\Delta t} = 3  P_{\Delta t} -  N_{\Delta t}+\rho_{hl},
\end{equation}
\begin{equation}
\label{conv3}
-{J_{u,\Delta t}}=\beta_u^0(\Psi_{\Delta t})\sigma_{\Delta t}u_{\Delta t}-\gamma_u^0(\Psi_{\Delta t}),~x=0,
\end{equation}
\begin{equation}
\label{conv4}
{J_{u,\Delta t}}=\beta_u^1(V-\Psi_{\Delta t})\sigma_{\Delta t}u_{\Delta t}-\gamma_u^1(V-\Psi_{\Delta t}),~x=1,
\end{equation}
\begin{equation}
\label{conv5}
\Psi_{\Delta t}-\alpha_0 \partial_x \Psi_{\Delta t}= \Delta \Psi_0^{pzc},~x=0,
\end{equation}
\begin{equation}
\label{conv6}
\Psi_{\Delta t}+\alpha_1 \partial_x \Psi_{\Delta t}= V- \Delta \Psi_1^{pzc},~x=1.
\end{equation}
The variational formulation of \eqref{conv1}-\eqref{conv6} writes : for all ${\varphi} \in L^2(0,T;H^1(0,1))$, 
\begin{multline}\label{SDfaible}
\int_0^T \int_0^1 \frac{\sigma_{\Delta t}u_{\Delta t}- u_{\Delta t}}{\Delta t} {\varphi} dxdt-\int_0^T \int_0^1 \left(-\partial_x \sigma_{\Delta t}u_{\Delta t}-z_u \sigma_{\Delta t}u_{\Delta t}\partial_x\Psi_{\Delta t}\right)\partial_x {\varphi} dx dt  \\
+ \int_0^T \left[\left(\beta_u^1(V-\Psi_{\Delta t}(1))\sigma_{\Delta t}u_{\Delta t}(1)-\gamma_u^1(V-\Psi_{\Delta t}(1))\right){\varphi}(1)\right.\\ 
\left.+\left(\beta_u^0(\Psi_{\Delta t}(0))\sigma_{\Delta t}u_{\Delta t}(0)-\gamma_u^0(\Psi_{\Delta t}(0))\right){\varphi}(0)\right]dt=0, \mbox{ for } u=N,P,
\end{multline} 
\begin{multline}\label{poissonSDfaible}
\lambda^2 \int_0^T \int_0^1 \partial_x \Psi_{\Delta t} \partial_x {\varphi} dx dt - \int_0^T \frac{\lambda^2}{\alpha_1} \left(V-\Psi_{\Delta t}(1)-\Delta \Psi_1^{pzc}\right){\varphi}(1)dt  \\
+ \int_0^T \frac{\lambda^2}{\alpha_0} \left(\Psi_{\Delta t}(0)-\Delta \Psi_0^{pzc}\right){\varphi}(0)dt=\int_0^T \int_0^1 (3P_{\Delta t}-N_{\Delta t}+\rho_{hl}){\varphi} dx dt. 
\end{multline}

Note that
\begin{multline*}
\int_0^T \int_0^1  \frac{\sigma_{\Delta t}u_{\Delta t}-u_{\Delta t}}{\Delta t} {\varphi} dxdt = -\int_0^T \int_0^1 u_{\Delta t}(t,x) \frac{{\varphi}(t,x)-{\varphi}(t-\Delta t,x)}{\Delta t} dx dt \\
-\int_0^1 u_0(x){\varphi}(0,x) dx
\end{multline*}
Then, using proposition \ref{convergence} and passing to the limit $\Delta t$ tends to zero in \eqref{SDfaible}-\eqref{poissonSDfaible} (up to a subsequence) , we get:
\begin{multline*}
-\int_0^T \int_0^1u \partial_t{\varphi} dxdt-\int_0^1 u_0(x){\varphi}(0,x) dx-\int_0^T \int_0^1 \left(-\partial_x u-z_u u\partial_x\Psi\right)\partial_x {\varphi} dx dt  \\
+ \int_0^T \left[\left(\beta_u^1(V-\Psi(1))u(1)-\gamma_u^1(V-\Psi(1))\right){\varphi}(1)\right.  \\ 
\left.+\left(\beta_u^0(\Psi(0))u(0)-\gamma_u^0(\Psi(0))\right){\varphi}(0)\right]dt=0,\mbox{ for } u=N,P,
\end{multline*} 
\begin{multline*}
\lambda^2 \int_0^T \int_0^1 \partial_x \Psi \partial_x {\varphi} dx dt - \int_0^T \frac{\lambda^2}{\alpha_1} \left(V-\Psi(1)-\Delta \Psi_1^{pzc}\right){\varphi}(1)dt   \\
+ \int_0^T \frac{\lambda^2}{\alpha_0} \left(\Psi(0)-\Delta \Psi_0^{pzc}\right){\varphi}(0)dt=\int_0^T \int_0^1 (3P-N+\rho_{hl}){\varphi} dx dt,
\end{multline*}
for all ${\varphi} \in L^2(0,T;H^1(0,1))$. It is the expected result. 

Furthermore, passing to the limit in \eqref{estlinf} gives for all $t \in [0,T]$ and all $x \in (0,1)$
$$0 \leq P(t,x) \leq P^m,~~0 \leq N(t,x) \leq N^m. $$
This ends the proof of Theorem \ref{existence}.


\section{Conclusion}

In this paper, we prove the existence of a weak solution to a simplified corrosion model. The boundedness of the electronic and cationic densities is also established. Let us note that the dimensionless parameter $\varepsilon$ in \eqref{DDN} is in practice very small because it is the ratio of the mobility coefficients of cations and electrons. It could be set to 0 in the model. Then, the question of existence of a solution to the resulting model is still an open problem. Future work will focus on this question.  The full understanding of the corrosion models reduced to a fixed domain will then permit to deal with the DPCM model on a  moving domain (see \cite{BaBoChFuHoTo2011}).

%
\bibliographystyle{plain}
\bibliography{./Biblio}

\end{document}